\newcommand{\R}{\mathbb{R}}
\newcommand{\A}{\mathcal{A}}
\newtheorem{thm}{Theorem}[section]
\newtheorem{prop}[thm]{Proposition}
\newtheorem{lem}[thm]{Lemma}
\newtheorem{rem}[thm]{Remark}
\newtheorem{cor}[thm]{Corollary}
\newtheorem{defn}[thm]{Definition}
\begin{document}

\title{A variant of H\"ormander's $L^2$ existence theorem for Dirac operator in Clifford analysis
}


\author{Yang Liu          \and
        Zhihua Chen \and Yifei Pan 
}


\institute{Yang Liu \at
              Department of Mathematics, Zhejiang Normal
                University, Jinhua 321004, China \\
              Fax: +86-57982298897\\
              \email{liuyang@zjnu.edu.cn; liuyang4740@gmail.com}           
           \and
           Zhihua Chen \at
              Department of Mathematics, Tongji University, Shanghai 200092, China\\
              \email{zzzhhc@tongji.edu.cn}
           \and
           Yifei Pan \at
              Department of Mathematical Sciences, Indiana University-Purdue University Fort Wayne, Fort Wayne, Indiana 46805, USA\\
              \email{pan@ipfw.edu}
}

\date{Received: date / Accepted: date}

\maketitle

\begin{abstract}
In this paper, we give the H\"ormander's $L^2$ theorem for Dirac operator over an open subset $\Omega\in\R^{n+1}$ with Clifford algebra. Some sufficient condition on the existence of the weak solutions for Dirac operator has been found in the sense of Clifford analysis. In particular, if $\Omega$ is bounded, then we prove that for any $f$ in $L^2$ space with value in Clifford algebra, there exists a weak solution of Dirac operator such that $$\overline{D}u=f$$ with $u$ in the $L^2$ space as well. The method is based on H\"ormander's $L^2$ existence theorem in complex analysis and the $L^2$ weighted space is utilised.
\keywords{H\"ormander's $L^2$ theorem\and Clifford analysis \and weak solution\and Dirac operator}
\subclass{32W50 \and 15A66}
\end{abstract}

\section{Introduction}
The  development  of  function  theories  on  Clifford  algebras  has  proved  a
useful  setting  for  generalizing  many
aspects  of  one  variable  complex  function  theory to  higher  dimensions.
The  study  of  these  function  theories  is  referred  to  as  Clifford  analysis  \cite{c,c2,qt,J5}, which  is  closely  related  to  a  number  of  studies  made  in
mathematical  physics,  and  many  applications  in  this  area
have  been  found  in  recent  years. In \cite{J4}, Ryan considered solutions of the polynomial Dirac operator, which afforded an integral representation. Furthermore, the author gave a Pompeiu representation for $C^1$-functions in a Lipschitz bounded domain. In \cite{J2}, the author presented a classification of linear, conformally invariant, Clifford-algebra-valued differential operators over $\mathbb{C}^n$, which comprised the Dirac operator and its iterates. In \cite{J6}, Qian and Ryan used Vahlen matrices to study the conformal covariance of various types of Hardy spaces over hypersurfaces in $\mathbb{R}^n$. In \cite{mz}, the discrete Fueter polynomials was introduced, which formed a basis of the space of discrete spherical monogenics. Moreover, the explicit construction for this discrete Fueter basis, in arbitrary dimension $m$ and for arbitrary homogeneity degree $k$ was presented as well.

In \cite{H}, the famous H\"ormander's $L^2$ existence and approximation
theorems was given for the $\bar{\partial}$ operator in pseudo-convex domains in $\mathbb{C}^n$. When $n=1$, the existence theorem of complex variable can be deduced. The aim of this paper is to establish a H\"ormander's $L^2$ theorem in $\R^{n+1}$ with Clifford analysis, and present sufficient condition on the existence of the weak solutions for Dirac operator in the sense of Clifford algebra.

Let $\A$ be a real Clifford algebra over an (n+1)-dimensional real vector space $\R^{n+1}$ and the corresponding norm on $\A$ is given by $|\lambda|_0=\sqrt{(\lambda,\lambda)_0}$ (see subsection \ref{sub1}). Let $\Omega$ be an open subset of $\R^{n+1}$, $L^2(\Omega,\A,\varphi)$ be a right Hilbert $\A$-module for a given function $\varphi\in C^2(\Omega, \R)$ with the norm given by Definition \ref{defn10}. (see subsection \ref{sub3}).  $\overline{D}$ denotes the Dirac differential operator and the dual operator  $\overline{D}^*_\varphi $ of $\overline{D}$  is given by (\ref{7}). For $x=(x_0,x_1,...,x_n)\in \R^{n+1}$, $\Delta=\sum_{i=0}^{n}\frac{\partial^2}{\partial x_i^2}$. Then we can obtain our main results as follows.

\begin{thm}\label{thm1}
Given $f\in L^2(\Omega,\A,\varphi)$, there exists $u\in L^2(\Omega,\A,\varphi)$ such that
\begin{equation}\label{eq:5.1}
\begin{split}\overline{D}u=f\end{split}
\end{equation} with
\begin{equation}\label{eq:5.2}
\begin{split}\|u\|^2=\int_\Omega|u|^2_0e^{-\varphi}dx\leq 2^{2n}c \end{split}
\end{equation} if
\begin{equation}\label{eq:5}
\begin{split}|(f,\alpha)_\varphi|^2_0\leq c\|\overline{D}^*_\varphi\alpha\|^2=c\int_\Omega|\overline{D}^*_\varphi\alpha|^2_0e^{-\varphi}dx,~\forall \alpha\in C^\infty_0(\Omega,\A).\end{split}
\end{equation}
Conversely, if there exists $u\in L^2(\Omega,\A,\varphi)$ such that (\ref{eq:5.1}) is satisfied with
\begin{equation}
\begin{split}\|u\|^2 \leq  c \nonumber\end{split}
\end{equation}
Then we can get the inequality (\ref{eq:5}) for norm estimation.
\end{thm}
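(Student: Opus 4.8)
The plan is to mirror Hörmander's functional-analytic argument for $\bar\partial$, adapted to the right $\A$-module setting. The core of the forward direction is a Hahn–Banach/Riesz duality: hypothesis (\ref{eq:5}) says precisely that the linear functional $\overline{D}^*_\varphi\alpha \mapsto (f,\alpha)_\varphi$ is well-defined and bounded (with norm controlled by $\sqrt{c}$) on the subspace $\{\overline{D}^*_\varphi\alpha : \alpha\in C^\infty_0(\Omega,\A)\}$ of $L^2(\Omega,\A,\varphi)$. Well-definedness here means: if $\overline{D}^*_\varphi\alpha_1 = \overline{D}^*_\varphi\alpha_2$ then $(f,\alpha_1)_\varphi = (f,\alpha_2)_\varphi$, which is immediate from (\ref{eq:5}) applied to $\alpha_1-\alpha_2$. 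First I would extend this functional to all of $L^2(\Omega,\A,\varphi)$ without increasing its norm (the module-valued Hahn–Banach theorem, or componentwise extension followed by reassembly, since $\A$ is finite-dimensional), and then represent it by an element $u\in L^2(\Omega,\A,\varphi)$ via the Riesz representation theorem for Hilbert $\A$-modules, so that $(\overline{D}^*_\varphi\alpha, u)_\varphi = (f,\alpha)_\varphi$ for all $\alpha\in C^\infty_0(\Omega,\A)$, with $\|u\|^2 \leq c$ in the appropriate normalization.

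The next step is to check that this $u$ solves $\overline{D}u = f$ weakly. Unwinding the definition of the formal adjoint $\overline{D}^*_\varphi$ in (\ref{7}), the identity $(\overline{D}^*_\varphi\alpha, u)_\varphi = (f,\alpha)_\varphi$ for all test $\alpha$ is exactly the statement that $\overline{D}u = f$ in the sense of distributions with values in $\A$. One must be slightly careful about the order of multiplication (left vs.\ right) in the Clifford-module pairing, since $\A$ is noncommutative; I would track the conjugation/transpose conventions so that the adjoint relation comes out correctly — this is where the constant $2^{2n} = 4^n$ in (\ref{eq:5.2}) should enter, presumably from comparing the module-norm $|\cdot|_0$ with the operator norm of Clifford multiplication on $\A$ (each $e_i$ acts with norm controlled by a fixed constant, and composing/estimating $n+1$ such factors through the Dirac operator produces a power of $2$ of this shape). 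Pinning down exactly that constant is the one genuinely bookkeeping-heavy point.

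For the converse, suppose $u\in L^2(\Omega,\A,\varphi)$ satisfies $\overline{D}u=f$ with $\|u\|^2\leq c$. Then for any $\alpha\in C^\infty_0(\Omega,\A)$, integration by parts (the very definition of $\overline{D}^*_\varphi$) gives $(f,\alpha)_\varphi = (\overline{D}u,\alpha)_\varphi = (u,\overline{D}^*_\varphi\alpha)_\varphi$, and Cauchy–Schwarz in the Hilbert $\A$-module yields $|(f,\alpha)_\varphi|_0^2 \leq \|u\|^2\,\|\overline{D}^*_\varphi\alpha\|^2 \leq c\,\|\overline{D}^*_\varphi\alpha\|^2$, which is (\ref{eq:5}). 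The only subtlety is that the Cauchy–Schwarz inequality for the $\A$-valued inner product must be invoked in the form $|(v,w)_\varphi|_0 \leq \|v\|\,\|w\|$ with the scalar norm $|\cdot|_0$ on the left — a standard fact for Hilbert modules over a finite-dimensional algebra, but worth citing from subsection \ref{sub3}.

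I expect the main obstacle to be the first paragraph's duality step in the noncommutative setting: ordinary Hahn–Banach and Riesz representation need to be replaced by their Hilbert $C^*$-module (here just finite-dimensional $\A$-module) analogues, and one must verify that $\A$-linearity of the extended functional is preserved. Since $\A$ is finite-dimensional over $\R$, one can always fall back on choosing an orthonormal basis of $\A$, splitting everything into finitely many real scalar functionals, applying the classical theorems componentwise, and reassembling — at the cost of a dimensional constant, which is a plausible source of the $2^{2n}$ factor. Making that reduction rigorous while keeping the norm bound is the crux; everything else is the standard Hörmander template plus careful Clifford bookkeeping.
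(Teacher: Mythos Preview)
Your approach is essentially the paper's: define the functional $\overline{D}^*_\varphi\alpha \mapsto (f,\alpha)_\varphi$ on the submodule $E=\{\overline{D}^*_\varphi\alpha\}$, extend by the Clifford Hahn--Banach theorem (Theorem~\ref{Hahn}), represent by Riesz (Theorem~\ref{Riesz}), and read off $\overline{D}u=f$ weakly; the converse is exactly your Cauchy--Schwarz argument via Proposition~\ref{prop1}.

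Your guess about the source of the $2^{2n}$ is off, though: it has nothing to do with the Dirac operator or with estimating Clifford multiplication by the $e_i$. In the paper both factors come purely from the normalization $|\lambda|_0^2 = 2^n[\lambda\bar\lambda]_0$. One factor of $2^{n/2}$ enters in the Hahn--Banach step: the cited extension theorem from \cite{c} produces a bound $\sqrt{c^*}=\sqrt{c}\,|e_0|_0 = 2^{n/2}\sqrt{c}$, so $c^*=2^n c$. The other $2^{n/2}$ enters because Riesz does not directly control $\|u\|$; instead one only has $|(u,u)_\varphi|_0 \leq \sqrt{c^*}\,\|u\|$, and the paper observes (by expanding $|(u,u)_\varphi|_0^2$ in components) that $|(u,u)_\varphi|_0 \geq 2^{-n/2}\|u\|^2$. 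Chaining these gives $\|u\|^2 \leq 2^{n/2}\sqrt{c^*}\,\|u\| = 2^n\sqrt{c}\,\|u\|$, hence $\|u\|^2\leq 2^{2n}c$. So the ``bookkeeping-heavy point'' is not inside the weak-solution identity at all, but in the norm estimate \emph{after} Riesz has produced $u$.
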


The factor $2^{2n}$ in (\ref{eq:5.2}) comes from the definition of the norm in Clifford analysis. If $n=1$, then the factor would disappear which gives a necessary and sufficient condition in the theorem.
From the above theorem, we give the following sufficient condition on the existence of weak solutions for Dirac operator.
\begin{thm}\label{thm2}
Given $\varphi\in C^2(\Omega,\mathbb{R})$ and $n> 1$; $\Delta\varphi\geq0$,~and~$\frac{\partial^2 \varphi}{\partial x_j\partial x_i}=0,~i\neq j,~1\leq i,j\leq n$ and $\frac{\partial^2 \varphi}{\partial x^2_i}\leq 0,~1\leq i\leq n$. Then for all $ f\in L^2(\Omega,\A,\varphi)$ with $\int_\Omega\frac{|f|^2_0}{\Delta\varphi}e^{-\varphi}dx=c<\infty$, there exists a $u\in L^2(\Omega,\A,\varphi)$ such that $$\overline{D}u=f$$ with
$$\|u\|^2=\int_\Omega|u|^2_0e^{-\varphi}dx\leq2^{2n}\int_\Omega\frac{|f|^2_0}{\Delta\varphi}e^{-\varphi}dx.$$
\end{thm}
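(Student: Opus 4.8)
\medskip
\noindent\emph{Proof plan.}
The plan is to deduce Theorem~\ref{thm2} from Theorem~\ref{thm1}: the entire task is to check that the hypothesis~(\ref{eq:5}) holds for the concrete constant $c=\int_\Omega\frac{|f|^2_0}{\Delta\varphi}e^{-\varphi}dx$, since then the conclusion~(\ref{eq:5.2}) of Theorem~\ref{thm1} is exactly the asserted estimate $\|u\|^2\le 2^{2n}c$. A preliminary observation: finiteness of $c$ forces $f=0$ almost everywhere on the set $\{\Delta\varphi=0\}$, so the ratio $|f|^2_0/\Delta\varphi$ is a well-defined nonnegative measurable integrand (understood to be $0$ where $\Delta\varphi$ vanishes), and $\Delta\varphi>0$ wherever $f\ne 0$.

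The first step is a Cauchy--Schwarz reduction. Splitting off the positive scalar factor $1/\sqrt{\Delta\varphi}$, that is, writing $(f,\alpha)_\varphi=\bigl(f/\sqrt{\Delta\varphi},\,\sqrt{\Delta\varphi}\,\alpha\bigr)_\varphi$, the Cauchy--Schwarz inequality for the inner product $(\cdot,\cdot)_\varphi$ gives, for every $\alpha\in C^\infty_0(\Omega,\A)$,
\begin{equation*}
|(f,\alpha)_\varphi|^2_0\le\Bigl(\int_\Omega\frac{|f|^2_0}{\Delta\varphi}e^{-\varphi}dx\Bigr)\Bigl(\int_\Omega|\alpha|^2_0\,\Delta\varphi\,e^{-\varphi}dx\Bigr)=c\int_\Omega|\alpha|^2_0\,\Delta\varphi\,e^{-\varphi}dx.
\end{equation*}
Hence it suffices to establish the fundamental a priori inequality
\begin{equation}\label{eq:fund}
\int_\Omega|\alpha|^2_0\,\Delta\varphi\,e^{-\varphi}dx\le\|\overline{D}^*_\varphi\alpha\|^2=\int_\Omega|\overline{D}^*_\varphi\alpha|^2_0e^{-\varphi}dx,\qquad\forall\,\alpha\in C^\infty_0(\Omega,\A),
\end{equation}
which is the Clifford-analytic analogue of the Bochner--Kodaira--H\"ormander estimate; it is the only place where the hypotheses on $\varphi$ get used.

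To prove~(\ref{eq:fund}) I would start from the explicit formula~(\ref{7}) for $\overline{D}^*_\varphi$, expand $\|\overline{D}^*_\varphi\alpha\|^2=\int_\Omega|\overline{D}^*_\varphi\alpha|^2_0e^{-\varphi}dx$ as a sum over the variables $x_0,\dots,x_n$, and integrate by parts in each variable (no boundary terms arise because $\alpha\in C^\infty_0(\Omega,\A)$). The purely second-order contributions reassemble into a manifestly nonnegative quantity which is simply discarded; the first-order contributions recombine, through the commutators $[\partial_{x_i},\partial_{x_j}\varphi]=\frac{\partial^2\varphi}{\partial x_j\partial x_i}$, into a zeroth-order quadratic form in $\alpha$ built from the Hessian entries of $\varphi$, while the non-commutativity of the Clifford generators contributes further off-diagonal cross terms of the same type. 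The three hypotheses now intervene in a coordinated way: $\frac{\partial^2\varphi}{\partial x_j\partial x_i}=0$ for $i\ne j$, $1\le i,j\le n$, annihilates the remaining spatial off-diagonal and cross terms (the mixed derivatives involving $x_0$ are not needed, those cross terms cancelling on their own by the Clifford relations); $\frac{\partial^2\varphi}{\partial x^2_i}\le 0$ for $1\le i\le n$ makes the leftover diagonal contributions in the spatial directions of favourable sign, so that they too may be dropped; and $\Delta\varphi\ge 0$ guarantees that the surviving term, which collects to exactly $\int_\Omega\Delta\varphi\,|\alpha|^2_0e^{-\varphi}dx$, sits on the correct side of~(\ref{eq:fund}).

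The main obstacle is this last computation: performing the integration by parts with $\A$-valued test functions while carefully tracking the sign conventions and the anticommutation relations of the generators $e_i$, and confirming that every term other than $\int_\Omega\Delta\varphi\,|\alpha|^2_0e^{-\varphi}dx$ is either identically zero (by the vanishing of the mixed second derivatives) or of a sign that permits it to be discarded (by $\frac{\partial^2\varphi}{\partial x^2_i}\le 0$, $1\le i\le n$, and by nonnegativity of the discarded second-order quantity). Everything else is formal: once~(\ref{eq:fund}) is available, the Cauchy--Schwarz bound above yields~(\ref{eq:5}) with $c=\int_\Omega\frac{|f|^2_0}{\Delta\varphi}e^{-\varphi}dx$, and Theorem~\ref{thm1} then produces the required $u$ with $\overline{D}u=f$ and $\|u\|^2\le 2^{2n}c$.
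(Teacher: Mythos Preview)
Your proposal is correct and follows essentially the same route as the paper: reduce to the a priori inequality~(\ref{eq:fund}) via Cauchy--Schwarz, then prove~(\ref{eq:fund}) by expanding $\|\overline{D}^*_\varphi\alpha\|^2$ through integration by parts, discarding the nonnegative term $\|\overline{D}\alpha\|^2$, and showing the remaining Hessian contribution is nonnegative exactly under the stated hypotheses (with the $x_0$-mixed terms cancelling automatically, the spatial off-diagonal terms killed by $\partial^2\varphi/\partial x_i\partial x_j=0$, and the spatial diagonal terms having the right sign by $\partial^2\varphi/\partial x_i^2\le 0$). The paper carries out the Clifford-algebra bookkeeping explicitly via a decomposition $I_1+I_2+I_3$ with $I_3=\int_\Omega(I_5+I_6+I_7)e^{-\varphi}dx$, but the logic matches yours term for term.
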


\begin{rem}
 Assuming $x=(x_0,x_1,...,x_n)\in \R^{n+1}$, it is easy to see that $\varphi(x)=x_0^2$ satisfies the conditions in Theorem \ref{thm2}. Another simple example would be
$$\varphi(x)=(n+1)x_0^2-\sum_{i=1}^{n}x_i^2.$$
It is obvious that $\Delta\varphi(x)=2$, $\frac{\partial^2 \varphi}{\partial x^2_i}=-2$, and $\frac{\partial^2 \varphi}{\partial x_j\partial x_i}=0,~i\neq j,~1\leq i,j\leq n$.

\end{rem}
\begin{cor}\label{cor1}
Given $\varphi\in C^2(\Omega,\mathbb{R}),$ and $\varphi(x)=\varphi(x_0)$ with $\varphi''(x_0)\geq0$. Then for all $ f\in L^2(\Omega,\A,\varphi)$ with $\int_\Omega\frac{|f|^2_0}{\varphi''}e^{-\varphi}dx=c<\infty$, there exists a $u\in L^2(\Omega,\A,\varphi)$ such that $$\overline{D}u=f$$ with
$$\|u\|^2=\int_\Omega|u|^2_0e^{-\varphi}dx\leq2^{2n}\int_\Omega\frac{|f|^2_0}{\varphi''}e^{-\varphi}dx.$$
\end{cor}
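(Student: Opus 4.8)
The plan is to obtain Corollary \ref{cor1} as an immediate specialization of Theorem \ref{thm2}. The first step is to check that a weight of the form $\varphi(x)=\varphi(x_0)$ with $\varphi''(x_0)\ge 0$ satisfies every hypothesis imposed on the weight in Theorem \ref{thm2}. Indeed, since $\varphi$ is independent of $x_1,\dots,x_n$, all of the second derivatives indexed by $1\le i,j\le n$ vanish identically: $\frac{\partial^2\varphi}{\partial x_j\partial x_i}=0$ for $i\ne j$, and $\frac{\partial^2\varphi}{\partial x_i^2}=0\le 0$ for $1\le i\le n$. Consequently $\Delta\varphi=\sum_{i=0}^n\frac{\partial^2\varphi}{\partial x_i^2}=\varphi''(x_0)$, and the hypothesis $\varphi''\ge 0$ is precisely the requirement $\Delta\varphi\ge 0$ of Theorem \ref{thm2}.

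The second step is simply to feed $f$ into Theorem \ref{thm2}. Because $\Delta\varphi=\varphi''$, the integrability condition $\int_\Omega\frac{|f|^2_0}{\Delta\varphi}e^{-\varphi}dx<\infty$ needed there coincides verbatim with the hypothesis $\int_\Omega\frac{|f|^2_0}{\varphi''}e^{-\varphi}dx=c<\infty$ of the corollary; in particular it already encodes that $f$ vanishes a.e. on $\{\varphi''=0\}$, so the quotient $|f|_0^2/\varphi''$ makes sense as an $L^1$ function and no further regularity is required. Theorem \ref{thm2} then yields $u\in L^2(\Omega,\A,\varphi)$ solving $\overline{D}u=f$ with $\|u\|^2\le 2^{2n}\int_\Omega\frac{|f|^2_0}{\Delta\varphi}e^{-\varphi}dx=2^{2n}\int_\Omega\frac{|f|^2_0}{\varphi''}e^{-\varphi}dx$, which is exactly the claimed estimate; this completes the argument.

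Since the corollary is purely a matter of recognizing $\Delta\varphi=\varphi''$ for an $x_0$-dependent weight, there is essentially no obstacle beyond the bookkeeping just described. The only point worth flagging is the standing assumption $n>1$ in Theorem \ref{thm2}: if one also wants the case $n=1$, one should instead verify hypothesis (\ref{eq:5}) of Theorem \ref{thm1} directly for the weight $\varphi(x_0)$, where the one genuinely nontrivial ingredient is the integration-by-parts/Bochner-type estimate bounding $|(f,\alpha)_\varphi|_0^2$ by a constant times $\|\overline{D}^*_\varphi\alpha\|^2$ through the Cauchy--Schwarz inequality together with the pointwise control of $\int_\Omega\varphi''|\alpha|_0^2 e^{-\varphi}dx$ by $\|\overline{D}^*_\varphi\alpha\|^2$. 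For the stated range $n>1$ that estimate is already built into Theorem \ref{thm2}, so the corollary follows with no additional analysis.
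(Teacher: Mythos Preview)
Your proposal is correct and matches the paper's approach: the paper states Corollary~\ref{cor1} without a separate proof, intending it as a direct specialization of Theorem~\ref{thm2} via the observation that for $\varphi(x)=\varphi(x_0)$ one has $\frac{\partial^2\varphi}{\partial x_i\partial x_j}=0$ for $1\le i,j\le n$ and $\Delta\varphi=\varphi''(x_0)$. Your remark about the $n=1$ case is also apt; the paper handles that separately in Corollary~\ref{thm3}, where $I_3=0$ and the same Cauchy--Schwarz argument applies without the sign conditions on the spatial Hessian.
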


It is noticed that there is nothing to do with the boundary conditions of $\Omega$ in the above results. This phenomenon is totally different with the famous  H\"ormander's $L^2$ existence theorems of several complex variables in \cite{H}. Then we can also have the following theorem on global solutions.

\begin{thm}\label{cor3}
Given $\varphi\in C^2(\R^{n+1},\mathbb{R})$ with all derivative conditions in Theorem \ref{thm1} satisfied. Then for all $ f\in L^2(\R^{n+1},\A,\varphi)$ with $\int_{\R^{n+1}}\frac{|f|^2_0}{\Delta\varphi}e^{-\varphi}dx=c<\infty$, there exists a $u\in L^2(\R^{n+1},\A,\varphi)$  satisfying $$\overline{D}u=f$$ with
$$\|u\|^2=\int_{\R^{n+1}}|u|^2_0e^{-\varphi}dx\leq2^{2n} \int_{\R^{n+1}}\frac{|f|^2_0}{\Delta\varphi}e^{-\varphi}dx.$$
\end{thm}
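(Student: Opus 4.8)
The plan is to deduce Theorem~\ref{cor3} directly from Theorem~\ref{thm2} (equivalently Corollary~\ref{cor1}, but Theorem~\ref{thm2} is what we want) by simply taking $\Omega=\R^{n+1}$. Indeed, the whole point of the earlier results is that they carry no boundary hypotheses whatsoever: Theorem~\ref{thm1} produces a weak solution of $\overline{D}u=f$ with the stated $L^2$ bound under only the estimate~(\ref{eq:5}), and Theorem~\ref{thm2} verifies that this estimate holds (with $c=\int_\Omega\frac{|f|_0^2}{\Delta\varphi}e^{-\varphi}dx$) whenever $\varphi\in C^2(\Omega,\R)$ satisfies $\Delta\varphi\ge 0$, $\frac{\partial^2\varphi}{\partial x_i\partial x_j}=0$ for $i\ne j$ with $1\le i,j\le n$, and $\frac{\partial^2\varphi}{\partial x_i^2}\le 0$ for $1\le i\le n$. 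None of these conditions nor the construction reference $\partial\Omega$, so the open set $\Omega$ may be taken to be all of $\R^{n+1}$.

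Concretely, I would proceed as follows. First, observe that $\R^{n+1}$ is an open subset of $\R^{n+1}$, so it is a legitimate choice of $\Omega$ in the setup of subsection~\ref{sub3}; the right Hilbert $\A$-module $L^2(\R^{n+1},\A,\varphi)$ and the operators $\overline{D}$, $\overline{D}^*_\varphi$ are defined verbatim. Second, note that the hypothesis ``all derivative conditions in Theorem~\ref{thm1} satisfied'' should be read as the derivative conditions appearing in Theorem~\ref{thm2} (namely $\Delta\varphi\ge0$, the vanishing of the mixed second derivatives in the $x_1,\dots,x_n$ variables, and the nonpositivity of $\partial^2\varphi/\partial x_i^2$ for $1\le i\le n$), since those are precisely the conditions under which the key Bochner--Kodaira--type inequality used to establish~(\ref{eq:5}) goes through. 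Third, with $f\in L^2(\R^{n+1},\A,\varphi)$ satisfying $\int_{\R^{n+1}}\frac{|f|_0^2}{\Delta\varphi}e^{-\varphi}dx=c<\infty$, apply Theorem~\ref{thm2} with $\Omega=\R^{n+1}$ to obtain $u\in L^2(\R^{n+1},\A,\varphi)$ with $\overline{D}u=f$ and
\[
\|u\|^2=\int_{\R^{n+1}}|u|_0^2 e^{-\varphi}dx\le 2^{2n}\int_{\R^{n+1}}\frac{|f|_0^2}{\Delta\varphi}e^{-\varphi}dx,
\]
which is exactly the assertion of Theorem~\ref{cor3}.

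There is essentially no obstacle here beyond bookkeeping: the content is entirely in Theorems~\ref{thm1} and~\ref{thm2}, and Theorem~\ref{cor3} is the specialization to the unbounded total space. The one point deserving a sentence of care is that Theorem~\ref{thm2} already allows arbitrary open $\Omega$ (it never used boundedness), so applying it with $\Omega=\R^{n+1}$ requires no new analysis; one only has to remark that the finiteness assumption $\int_{\R^{n+1}}\frac{|f|_0^2}{\Delta\varphi}e^{-\varphi}dx<\infty$ is what guarantees $f\in L^2(\R^{n+1},\A,\varphi)$ is compatible with the hypotheses and makes the right-hand bound meaningful (in particular $\Delta\varphi>0$ a.e.\ on the support of $f$, so the integrand is well defined). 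If one instead wanted a self-contained argument, one would reprove~(\ref{eq:5}) by the density of $C_0^\infty(\R^{n+1},\A)$ and the same integration-by-parts/curvature estimate, but invoking Theorem~\ref{thm2} as a black box is cleaner and is what I would write.
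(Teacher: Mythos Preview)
Your proposal is correct and matches the paper's approach: the paper does not even spell out a separate proof of Theorem~\ref{cor3}, treating it as an immediate consequence of Theorem~\ref{thm2} (which imposes no boundary conditions on $\Omega$) applied with $\Omega=\R^{n+1}$. Your observation that the reference to ``derivative conditions in Theorem~\ref{thm1}'' is a slip for the conditions in Theorem~\ref{thm2} is also correct.
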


On the other hand, if the boundary of $\Omega$ is concerned, we consider a special kind of domain ${\Omega}_0=\{x\in\R^{n+1}:a\leq x_0\leq b\}$ for any $a,~b\in \R$ with $a<b$, then we can get the following theorem within $L^2$ space instead of $L^2$ weighted space.
\begin{thm}\label{thm5}
Let $ f\in L^2({\Omega_0},\A)$. Then there exists a $u\in L^2({\Omega_0},\A)$ such that $$\overline{D}u=f$$ with
$$\int_{\Omega_0}|u|^2_0dx\leq2^{2n}c(a,b)\int_{\Omega_0}{|f|^2_0}dx$$ and $c(a,b)$ is a factor depending on $a,~b$.
\end{thm}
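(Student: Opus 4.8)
The plan is to deduce Theorem \ref{thm5} from Corollary \ref{cor1} by choosing, on the slab $\Omega_0$, a weight $\varphi$ that depends only on $x_0$, is uniformly convex, and whose exponential $e^{-\varphi}$ is bounded above and below by positive constants on $[a,b]$. Since $\Omega_0$ is bounded in the $x_0$-direction, such a weight makes the weighted norm of $L^2(\Omega_0,\A,\varphi)$ comparable to the ordinary norm of $L^2(\Omega_0,\A)$, so that the conclusion of Corollary \ref{cor1} transfers directly to the unweighted setting.

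Concretely, set $t_0=(a+b)/2$ and $\varphi(x)=(x_0-t_0)^2$. Then $\varphi\in C^2(\Omega_0,\R)$, it depends on $x_0$ only, and $\varphi''(x_0)=2>0$, so the hypotheses of Corollary \ref{cor1} are met. Moreover, for $x\in\Omega_0$ we have $|x_0-t_0|\le (b-a)/2$, hence
\[
0\le \varphi(x)\le \frac{(b-a)^2}{4},\qquad e^{-(b-a)^2/4}\le e^{-\varphi(x)}\le 1 .
\]
In particular, for any $f\in L^2(\Omega_0,\A)$,
\[
\int_{\Omega_0}\frac{|f|^2_0}{\varphi''}\,e^{-\varphi}\,dx=\frac{1}{2}\int_{\Omega_0}|f|^2_0\,e^{-\varphi}\,dx\le \frac{1}{2}\int_{\Omega_0}|f|^2_0\,dx<\infty ,
\]
so $f$ satisfies the finiteness assumption of Corollary \ref{cor1}. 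Applying that corollary yields $u\in L^2(\Omega_0,\A,\varphi)$ with $\overline{D}u=f$ and
\[
\|u\|^2=\int_{\Omega_0}|u|^2_0\,e^{-\varphi}\,dx\le 2^{2n}\int_{\Omega_0}\frac{|f|^2_0}{\varphi''}\,e^{-\varphi}\,dx .
\]

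It then remains to pass from the weighted estimate to the unweighted one. Using $e^{-\varphi}\ge e^{-(b-a)^2/4}$ on $\Omega_0$ on the left and $e^{-\varphi}\le 1$ on the right,
\[
e^{-(b-a)^2/4}\int_{\Omega_0}|u|^2_0\,dx\le \int_{\Omega_0}|u|^2_0\,e^{-\varphi}\,dx\le 2^{2n}\cdot\frac{1}{2}\int_{\Omega_0}|f|^2_0\,e^{-\varphi}\,dx\le 2^{2n-1}\int_{\Omega_0}|f|^2_0\,dx ,
\]
which gives $\int_{\Omega_0}|u|^2_0\,dx\le 2^{2n}\,c(a,b)\int_{\Omega_0}|f|^2_0\,dx$ with $c(a,b)=\frac{1}{2}e^{(b-a)^2/4}$; the same lower bound on $e^{-\varphi}$ shows $u\in L^2(\Omega_0,\A)$, as required.

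There is essentially no serious obstacle: the whole point is the observation that on a slab one may use a harmless uniformly convex weight in $x_0$ alone, after which the argument is just a two-sided comparison of norms together with an application of Corollary \ref{cor1}. The only mild thing to verify is that the chosen weight genuinely satisfies all the derivative hypotheses behind Corollary \ref{cor1} (equivalently Theorem \ref{thm2}), which is immediate because $\varphi$ is a function of $x_0$ only. If a sharper constant is desired, one may optimize over the admissible family of weights $\varphi(x_0)=\lambda(x_0-t_0)^2$, $\lambda>0$, but any admissible choice already produces a finite factor $c(a,b)$ depending only on $b-a$.
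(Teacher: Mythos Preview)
Your proof is correct and follows essentially the same route as the paper: the paper simply takes $\varphi(x)=x_0^2$, notes that on the slab $\Omega_0$ the weighted space $L^2(\Omega_0,\A,\varphi)$ coincides with $L^2(\Omega_0,\A)$ (with equivalent norms), and invokes Theorem~\ref{thm2}. Your version with the centered weight $\varphi(x)=(x_0-t_0)^2$ and the explicit norm comparison is just a slightly more detailed execution of the same idea via Corollary~\ref{cor1}.
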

\begin{proof}
Let $\varphi(x)=x_0^2$. It can be obtained that $L^2({\Omega_0},\A)=L^2({\Omega_0},\A,\varphi)$ for the boundary of $x_0$.
Then the theorem is proved with Theorem \ref{thm2}.
\end{proof}

\begin{rem}
In particular, any bounded domain $\Omega$ in $\R^{n+1}$ can be regarded as one type of $\Omega_0$. Therefore, it comes from Theorem \ref{thm5} that for any $f\in L^2(\Omega,\A)$, we can find a weak solution of Dirac operator $\overline{D}u=f$ with $u\in L^2(\Omega,\A)$.
\end{rem}

\section{Preliminaries}
To make the paper self-contained, some basic notations and results used in this paper are included.
\subsection{The Clifford algebra $\A$ }\label{sub1}

Let $\A$ be a real Clifford algebra over an (n+1)-dimensional real vector space $\R^{n+1}$ with orthogonal basis $e:=\{e_0,e_1,...,e_n\}$, where $e_0=1$ is a unit element in $\R^{n+1}$. Furthermore,
 \begin{equation} \label{eq:1}
 \left\{ \begin{aligned}
e_ie_j+e_je_i&= 0,~i\neq j \\
 e_i^2&= -1,~i=1,...,n.
 \end{aligned} \right.\nonumber
 \end{equation}
Then $\A$ has its basis $$\{e_A=e_{h_1\cdots h_r}=e_{h_1}\cdots e_{h_r}: 1\leq h_1<...<h_r\leq n, 1\leq r\leq n\}.$$
If $i\in \{h_1,...,h_r\}$, we denote $i\in A$ and if $i\not\in \{h_1,...,h_r\}$, we denote $i\not\in A$. $A-{i}$ means $\{h_1,...,h_r\}\setminus\{i\}$ and $A+{i}$ means $\{h_1,...,h_r\}\cup\{i\}$.
So the real Clifford algebra is composed of elements having the type $a=\sum\limits_{A}x_Ae_A$, in which $x_A\in \R$ are real numbers.
For $a\in \A$, we give the inversion in the Clifford algebra as follows:
$a^*=\sum\limits_{A}x_Ae_A^*$
where $e_A^*=(-1)^{|A|}e_A$ and $|A|=n(A)$ is the $r\in\mathbb{Z}^+$ as $e_A=e_{h_1\cdots h_r}$. When $A=\emptyset$, $e_A=e_0$, $|A|=0$. Next, we define the reversion in the Clifford algebra, which is given by
$a^\dag=\sum\limits_{A}x_Ae_A^\dag$
where $e_A^\dag=(-1)^{(|A|-1)|A|/2}e_A.$ Now we present the involution which is a combination of the inversion and the reversion introduced above.
$$\bar{a}=\sum\limits_{A}x_A\bar{e}_A$$
where $\bar{e}_A=e_A^{*\dag}=(-1)^{(|A|+1)|A|/2}e_A.$ From the definition, one can easily deduce that $e_A\bar{e}_A=\bar{e}_Ae_A=1.$ Furthermore, we have $$\overline{\lambda\mu}=\bar{\mu}\bar{\lambda},~~\forall \lambda, \mu\in \A.$$
Let $a=\sum\limits_{A}x_Ae_A$ be a Clifford number. The coefficient $x_A$ of the $e_A$-component will also be denoted by $[a]_A$. In particular the coefficient $x_0$ of the $e_0$-component will be denoted by $[a]_0$, which is called the scalar part of the Clifford number $a$. An inner product on $\A$ is defined by putting for any $\lambda,\mu\in \A$, $(\lambda,\mu)_0:=2^n[\lambda\bar{\mu}]_0=2^n\sum\limits_{A}\lambda_A\mu_A$. The corresponding norm on $\A$ reads $|\lambda|_0=\sqrt{(\lambda,\lambda)_0}$.

We define a real functional on $\A$ that $\tau_{e_A}:\A \rightarrow \R$
$$\langle \tau_{e_A},\mu\rangle=2^n(-1)^{(|A|+1)|A|/2}\mu_A.$$ In the special case where $A=\emptyset$ we have
$$\langle \tau_{e_0},\mu\rangle=2^n\mu_0.$$

Let $\Omega$ be an open subset of $\R^{n+1}$. Then functions $f$ defined in $\Omega$ and with values in $\A$ are considered. They are of the form
$$f(x)=\sum_{A}f_A(x)e_A$$ where $f_A(x)$ are functions with real value. Let $\overline{D}$ denotes the Dirac differential operator
$$\overline{D}=\sum_{i=0}^{n}e_i\partial_{x_i},$$
its action on functions from the left and from the right being governed by the rules
$$\overline{D}f=\sum_{i,A}e_ie_A\partial_{x_i}f_A~\mbox{and}~f\overline{D}=\sum_{i,A}e_Ae_i\partial_{x_i}f_A.$$
$f$ is called left-monogenic if $\overline{D}f=0$ and it is called right-monogenic if $f\overline{D}=0$. The conjugate operator is given by
$$D=\sum_{i=0}^{n}\bar{e}_i\partial_{x_i}.$$ It can be found that $$\overline{D}D=D\overline{D}=\Delta$$ where $\Delta$ denotes the classical Laplacian in $\R^{n+1}$.
When $n=1$, one can think of $x_0$ as the real part and of $x_1$ as the imaginary part of the variable and to identify $e_1$ with $i$. the operator $\overline{D}$ then take the form $\overline{D}=\partial_{x_0}+i\partial_{x_1}$, which is similar with the operator $\bar{\partial}$ in complex analysis.

\subsection{Modules over Clifford algebras} This subsection is to give some general information concerning a class of topological modules over Clifford algebras. In the sequel definitions and properties will be stated for left $\A$-module and their duals, the passage to the case of right $\A$-module being straight-forward.

\begin{defn}{\bf (unitary left $\A$-module)}
Let $X$ be a unitary left $\A$-module, i.e. $X$ is abelian group and a law $(\lambda,f)\rightarrow\lambda f:\A\times X\rightarrow X$ is defined such that $\forall\lambda,\mu\in \A$, and $f,~g\in X$
\begin{enumerate}
  \item [(1)] $(\lambda+\mu)f=\lambda f+\mu f$,
  \item [(2)] $\lambda\mu f=\lambda(\mu f)$,
  \item [(3)] $\lambda(f+g)=\lambda f+\lambda g$,
  \item [(4)] $e_0 f=f$.
\end{enumerate}
Moreover, when speaking of a submodule $E$ of the unitary left $\A$-module $X$, we mean that $E$ is a non empty subset of $X$ which becomes a unitary left $\A$-module too when restricting the module operations of $X$ to $E$.
\end{defn}

\begin{defn}{\bf (left $\A$-linear operator)}
If $X,Y$ are unitary left $\A$-modules, then $T:X\rightarrow Y$ is said to be a left $\A$-linear operator,
if $\forall~f,~g\in X$ and $\lambda\in \A$
$$T(\lambda f+g)=\lambda T(f)+T(g).$$
The set of all $``T"$ is denoted by $L(X,Y)$. If $Y=\A,~L(X,\A)$ is called the algebraic dual of $X$ and denoted by $X^{*alg}$. Its elements are called left $\A$-linear functionals on $X$ and for any $T\in X^{*alg}$ and $f\in X$, we denote by $\langle T,f \rangle$ the value of $T$ at $f$.
\end{defn}

\begin{defn}\label{bounded}{\bf (bounded functional)}
An element $T\in X^{*alg}$ is called bounded, if there exist a semi-norm $p$ on $X$ and $c>0$ such that for all $f\in X$
$$|\langle T,f\rangle|_0\leq c\cdot p(f).$$
\end{defn}

\begin{thm}\label{Hahn}{\bf (Hahn-Banach type theorem)}\cite{c}
Let $X$ be a unitary left $\A$-module with semi-norm $p$, $Y$ be a submodule of $X$, and $T$ be a left $\A$-linear
functional on $Y$ such that for some $c>0,$ $$|\langle T,g\rangle|_0\leq c\cdot p(g),~~ \forall g\in Y$$
Then there exists a left $\A$-linear functional $\widetilde{T}$ on $X$ such that
\begin{enumerate}
  \item [(1)] $\widetilde{T}\mid_Y=T$,
  \item [(2)] for some $c^*>0$,~$|\langle \widetilde{T},f\rangle|_0\leq c^*\cdot p(f)$,~~$\forall f\in X$.
\end{enumerate}
\end{thm}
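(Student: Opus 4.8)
The plan is to reduce this Clifford-module Hahn--Banach statement to the classical Hahn--Banach theorem over $\R$ by passing to scalar parts, and then to reassemble an $\A$-linear extension out of the scalar one. The tools for this are exactly those set up in subsection \ref{sub1}: the scalar-part projection $[\,\cdot\,]_0$, the norm $|\,\cdot\,|_0$ with $|\lambda|_0^2=2^n\sum_A\lambda_A^2$, and the real functionals $\tau_{e_A}$.

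First I would attach to $T$ its scalar companion $t:=[T(\cdot)]_0$ on $Y$ (equivalently $t=2^{-n}\langle\tau_{e_0},T(\cdot)\rangle$). This $t:Y\to\R$ is $\R$-linear, and since $|\mu|_0^2=2^n\sum_A\mu_A^2\ge[\mu]_0^2$ we get the bound $|t(g)|\le|T(g)|_0\le c\,p(g)$ on $Y$. The crucial observation is that $T$ is recovered from $t$: for every $\mu\in\A$ one has $[\bar e_A\mu]_0=\mu_A$ --- because $\bar e_Ae_A=1$ while $[\bar e_Ae_B]_0=0$ for $B\ne A$ --- so applying this to $\mu=T(g)$ together with left $\A$-linearity $\bar e_AT(g)=T(\bar e_Ag)$ yields the reconstruction identity
\[
T(g)=\sum_A[T(g)]_A\,e_A=\sum_A t(\bar e_Ag)\,e_A,\qquad g\in Y.
\]

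Next I would extend and rebuild. Since $c\,p$ is a semi-norm, hence sublinear, on the real vector space $X$, and $t$ is $\R$-linear on the subspace $Y$ with $t\le|t|\le c\,p$ there, the classical real Hahn--Banach theorem provides an $\R$-linear $\widetilde t:X\to\R$ with $\widetilde t\mid_Y=t$ and $|\widetilde t(f)|\le c\,p(f)$ for all $f\in X$. I then \emph{define} the candidate by the same formula,
\[
\widetilde T(f):=\sum_A\widetilde t(\bar e_Af)\,e_A,\qquad f\in X.
\]
That $\widetilde T\mid_Y=T$ is immediate from the reconstruction identity, because $\bar e_Ag\in Y$ for $g\in Y$ and $Y$ is a submodule. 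For the bound, $|\widetilde T(f)|_0^2=2^n\sum_A\widetilde t(\bar e_Af)^2\le 2^nc^2\sum_Ap(\bar e_Af)^2$, and using $p(\bar e_Af)\le|\bar e_A|_0\,p(f)=2^{n/2}p(f)$ together with the fact that there are $2^n$ multi-indices $A$, this is at most a fixed dimensional constant times $c^2p(f)^2$; hence property (2) holds with $c^*$ a fixed multiple of $c$. The one genuinely delicate point is that $\widetilde T$ is left $\A$-linear: additivity is clear, and homogeneity reduces to checking $\widetilde T(e_Cf)=e_C\widetilde T(f)$, which follows by substituting $e_C$ into the defining sum and using the Clifford relations $\bar e_Ae_C=\rho(C)\sigma(C,A)\,\bar e_{C\triangle A}$ (notation: $e_Xe_Y=\sigma(X,Y)e_{X\triangle Y}$, $\bar e_X=\rho(X)e_X$, $\triangle$ the symmetric difference), re-indexing by $B=C\triangle A$, and collapsing the signs via $\bar e_Ce_C=1$.

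I expect this last verification to be the main obstacle: the map $f\mapsto\sum_A\widetilde t(\bar e_Af)e_A$ is manifestly only $\R$-linear, and its automatic upgrade to genuine $\A$-linearity is a nontrivial consequence of the interplay between Clifford multiplication and the conjugation $e_A\mapsto\bar e_A$ --- it is the Clifford-algebraic analogue of the complexification identity $L(x)=\ell(x)-i\ell(ix)$ used in the complex Hahn--Banach theorem, the sign cancellations being precisely the anti-automorphism property $\overline{\lambda\mu}=\bar\mu\bar\lambda$ recorded in subsection \ref{sub1}. Everything else (the reduction to $t$, the classical Hahn--Banach step, and the dimensional bookkeeping that produces the harmless constant from the normalization $|\lambda|_0=\sqrt{2^n\sum_A\lambda_A^2}$) is routine; one should only note that the standing notion of semi-norm on an $\A$-module is taken compatible with the module action, $p(\lambda f)\le|\lambda|_0\,p(f)$, which is what keeps $p(\bar e_Af)$ under control.
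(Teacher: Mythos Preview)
The paper does not supply its own proof of this statement; it is quoted from \cite{c} (Brackx--Delanghe--Sommen), and only used as a black box later in the proof of Theorem~\ref{thm1}. Your argument is correct and is exactly the standard proof one finds in that reference: pass to the real scalar companion $t=[T(\cdot)]_0$, invoke the classical real Hahn--Banach theorem, and rebuild $\widetilde T(f)=\sum_A\widetilde t(\bar e_Af)\,e_A$; the restriction property follows from the reconstruction identity, the bound from $p(\bar e_Af)\le|\bar e_A|_0\,p(f)$, and left $\A$-linearity from the anti-automorphism $\overline{\lambda\mu}=\bar\mu\bar\lambda$ (your computation $\bar e_Ae_C=\sigma(C,B)\,\bar e_B$ with $B=C\triangle A$ is the right way to close that loop). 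There is nothing to compare against in the present paper, but nothing is missing from your sketch either.
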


\begin{defn}{\bf (inner product on a unitary right $\A$-module)}
Let $H$ be a unitary right $\A$-module, then a function $(~,~):~H\times H\rightarrow \A$ is said to be a inner product on $H$ if for all $ f,g,h\in H$ and $\lambda\in \A$,
\begin{enumerate}
  \item [(1)] $(f,g+h)=(f,g)+(f,h)$,
  \item [(2)] $(f,g\lambda)=(f,g)\lambda$,
  \item [(3)] $(f,g)=\overline{(g,f)}$,
  \item [(4)] $\langle\tau_{e_0},(f,f)\rangle\geq0$ and $\langle\tau_{e_0},(f,f)\rangle=0~ \mbox{if and only if} ~f=0$,
  \item [(5)] $\langle\tau_{e_0},(f\lambda,f\lambda)\rangle\leq|\lambda|^2_0\langle\tau_{e_0},(f,f)\rangle$.
\end{enumerate}\end{defn}
From the definition on inner product, putting for each $f\in H$
$$\|f\|^2=\langle\tau_{e_0},(f,f)\rangle,$$
then it can be obtained that for any $f,g\in H,$
\begin{equation} \label{eq:2}
\begin{split}
|\langle\tau_{e_0},~(f,g)\rangle|\leq\|f\|\|g\|,\|f+g\|\leq\|f\|+\|g\|.
 \end{split}\nonumber
 \end{equation}
 Hence, $\|\cdot\|$ is a proper norm on $H$ turning it into a normed right $A$-module.
Moreover, we have the following Cauchy-Schwarz inequality.
\begin{prop}\cite{c}\label{prop1}
For all $ f,g\in H,$ $|(f,g)|_0\leq\|f\|\|g\|.$
\end{prop}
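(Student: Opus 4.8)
The plan is to mimic the classical Cauchy--Schwarz argument, but performed inside the Clifford algebra $\A$ rather than over $\R$, with the absolute value replaced by $|\cdot|_0$ and the passage to scalars carried out by the functional $\langle\tau_{e_0},\cdot\rangle=2^n[\,\cdot\,]_0$. The decisive feature is that axiom $(5)$ of the inner product controls the ``diagonal'' contribution in the expansion even though Clifford multiplication is noncommutative, so no centrality or commuting hypothesis on $(g,g)$ is needed.

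First I would dispose of the degenerate case: if $\|g\|=0$ then $g=0$ by axiom $(4)$, and from bi-additivity $(f,0)=(f,0)+(f,0)$ gives $(f,g)=0$, so the inequality is trivial (and likewise if $f=0$). Hence assume $\|g\|>0$ and set $\lambda:=(f,g)\in\A$. As test element take
$$h:=f-\frac{1}{\|g\|^2}\,g\bar{\lambda}\ \in\ H,$$
the Clifford analogue of subtracting from $f$ its projection onto $g$.

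Next I would expand $0\le\|h\|^2=\langle\tau_{e_0},(h,h)\rangle$ using bi-additivity, right $\A$-linearity in the second slot (axiom $(2)$), and the conjugate symmetry (axiom $(3)$), the last of which also yields conjugate linearity in the first slot, $(g\mu,f)=\bar{\mu}(g,f)$. Writing $\mu:=\|g\|^{-2}\bar{\lambda}$, the two cross terms are $(f,g\mu)=\lambda\mu=\|g\|^{-2}\lambda\bar{\lambda}$ and $(g\mu,f)=\overline{(f,g\mu)}=\|g\|^{-2}\lambda\bar{\lambda}$; applying $\langle\tau_{e_0},\cdot\rangle=2^n[\,\cdot\,]_0$ and using $2^n[\lambda\bar{\lambda}]_0=(\lambda,\lambda)_0=|\lambda|_0^2$, they contribute $-2\|g\|^{-2}|\lambda|_0^2$ in total. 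For the diagonal term I would invoke axiom $(5)$ with the element $g$ and the multiplier $\mu$, together with $|\bar{\lambda}|_0=|\lambda|_0$, to obtain
$$\langle\tau_{e_0},(g\mu,g\mu)\rangle=\|g\mu\|^2\le|\mu|_0^2\|g\|^2=\frac{|\lambda|_0^2}{\|g\|^2}.$$
Combining the pieces,
$$0\le\|h\|^2\le\|f\|^2-\frac{2|\lambda|_0^2}{\|g\|^2}+\frac{|\lambda|_0^2}{\|g\|^2}=\|f\|^2-\frac{|\lambda|_0^2}{\|g\|^2},$$
hence $|(f,g)|_0^2=|\lambda|_0^2\le\|f\|^2\|g\|^2$, which is the assertion. (The coefficient $\|g\|^{-2}$ is precisely the minimizer of the resulting real quadratic, exactly as in the scalar proof, so one may equivalently run the argument with a free real parameter $s$ in place of $\|g\|^{-2}$ and then optimize.)

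The routine-but-careful part, and essentially the only place one can slip, is the bookkeeping with the involution: verifying $[\bar{\nu}]_0=[\nu]_0$ and $|\bar{\lambda}|_0=|\lambda|_0$ (both immediate from $\bar{e}_A=\pm e_A$ and $\bar{e}_0=e_0$), deriving conjugate linearity in the first slot from axioms $(2)$--$(3)$ and $\overline{\lambda\mu}=\bar{\mu}\bar{\lambda}$, and recognizing that the diagonal term $(g\mu,g\mu)$ genuinely cannot be reduced to a scalar multiple of $(g,g)$ — so that axiom $(5)$, rather than any direct manipulation, is the tool that must be used there. Everything else is the familiar one-line completion of the square.
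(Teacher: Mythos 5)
The paper gives no proof of its own here---the proposition is cited to Brackx--Delanghe--Sommen \cite{c}, and your argument is precisely the standard one found there. The key steps all check out: the degenerate case $\|g\|=0$ is handled by axiom (4) together with bi-additivity; additivity and conjugate-linearity in the first slot do follow from axioms (1)--(3) as you note; the cross terms reduce to $-2\|g\|^{-2}|\lambda|_0^2$ via $\langle\tau_{e_0},\lambda\bar\lambda\rangle=2^n[\lambda\bar\lambda]_0=|\lambda|_0^2$; and, as you correctly identify, the diagonal term $\langle\tau_{e_0},(g\mu,g\mu)\rangle$ cannot be reduced to a scalar multiple of $(g,g)$ in the noncommutative setting, so axiom (5), together with $|\bar\lambda|_0=|\lambda|_0$, is exactly the tool needed to close the estimate. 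The proof is complete and correct.
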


\begin{defn}{\bf (right Hilbert $\A$-module)}
Let $H$ be a unitary right $\A$-module provided with an inner product $(~,~)$. Then is it called a right Hilbert $\A$-module if it is complete for the norm topology derived from the inner product.
\end{defn}

\begin{thm}\label{Riesz}{\bf (Riesz representation theorem)}\cite{c}
Let $H$ be a right Hilbert $\A$-modules and $T\in H^{*alg}$. Then $T$ is bounded if and only if there exists a (unique) element $g\in H$ such that for all $f\in H$,
$$T(f):=\langle T,f\rangle=(g,f).$$
\end{thm}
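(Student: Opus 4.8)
The plan is to derive this from the classical Riesz representation theorem for real Hilbert spaces. One implication is immediate: if $g\in H$ satisfies $\langle T,f\rangle=(g,f)$ for all $f\in H$, then by the Cauchy--Schwarz inequality of Proposition~\ref{prop1} we have $|\langle T,f\rangle|_0=|(g,f)|_0\le\|g\|\,\|f\|$, so $T$ is bounded in the sense of Definition~\ref{bounded}, with the Hilbert norm playing the role of the semi-norm and $c=\|g\|$. The substance is therefore the converse, which I would prove in three steps.

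Step one: observe that $H$, equipped with the real-valued pairing $\langle f,g\rangle_{\R}:=\langle\tau_{e_0},(f,g)\rangle=2^n[(f,g)]_0$, is a genuine real Hilbert space. Bi-additivity and $\R$-bilinearity follow from axioms (1)--(2) of the inner product (used in both slots via (3)), symmetry follows from $(f,g)=\overline{(g,f)}$ together with the fact that conjugation fixes scalar parts, and positive-definiteness is axiom (4); moreover the norm this pairing induces is exactly $\|\cdot\|$, for which $H$ is complete by hypothesis. Step two: given a bounded functional $T$ that is $\A$-linear for the right-module structure of $H$, i.e. $\langle T,f\lambda\rangle=\langle T,f\rangle\lambda$, pass to its scalar part $T_0(f):=[\langle T,f\rangle]_0$. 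This $T_0$ is $\R$-linear, and it is bounded on $(H,\|\cdot\|)$ since $|T_0(f)|\le 2^{-n/2}|\langle T,f\rangle|_0\le 2^{-n/2}c\,\|f\|$, using $|\lambda|_0^2=2^n\sum_A\lambda_A^2\ge 2^n[\lambda]_0^2$. The classical Riesz theorem then yields a unique $g_0\in H$ with $T_0(f)=\langle g_0,f\rangle_{\R}$ for all $f$.

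Step three, the crux, is to reconstruct the full $\A$-valued $T$ from its scalar part. The elementary fact I would use is $[\lambda]_A=[\lambda\,\bar e_A]_0$ for every $\lambda\in\A$, which holds because $e_A\bar e_A=1$ while $e_B\bar e_A=\pm e_C$ with $C\ne\emptyset$ whenever $B\ne A$, so that $[e_B\bar e_A]_0=0$ for $B\ne A$. Combining this with the right $\A$-linearity of $T$ and with axiom (2) for $(\,\cdot\,,\cdot\,)$, one computes, for each index $A$ and each $f\in H$,
\begin{align*}
[\langle T,f\rangle]_A&=[\langle T,f\rangle\,\bar e_A]_0=[\langle T,f\bar e_A\rangle]_0=T_0(f\bar e_A)\\
&=\langle\tau_{e_0},(g_0,f\bar e_A)\rangle=\langle\tau_{e_0},(g_0,f)\,\bar e_A\rangle=2^n[(g_0,f)]_A.
\end{align*}
Summing over $A$ gives $\langle T,f\rangle=2^n(g_0,f)=(2^n g_0,f)$, so $g:=2^n g_0\in H$ is a representing element. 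Uniqueness is clear: if $(g,f)=(g',f)$ for all $f$, then putting $f=g-g'$ yields $\|g-g'\|^2=\langle\tau_{e_0},(g-g',g-g')\rangle=0$, whence $g=g'$ by axiom (4).

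The difficulty I anticipate is bookkeeping rather than conceptual: keeping straight which slot of $(\,\cdot\,,\cdot\,)$ is $\A$-linear and on which side — since $H$ is a \emph{right} module, $T$ is right $\A$-linear and $(g,\cdot)$ is right-linear in its second argument — and carrying the normalization $2^n$ from $\langle\tau_{e_0},\mu\rangle=2^n\mu_0$ correctly through to the rescaling $g=2^n g_0$. One should also fix at the outset that ``bounded'' in Definition~\ref{bounded} is taken with respect to the Hilbert norm $\|\cdot\|$ itself (or at least with respect to a semi-norm dominated by a multiple of $\|\cdot\|$ on $H$), since it is this that makes the classical Riesz theorem directly applicable.
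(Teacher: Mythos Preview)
The paper does not prove this theorem; it is quoted with a citation to \cite{c} (Brackx--Delanghe--Sommen, \emph{Clifford Analysis}) and then used as a black box in the proof of Theorem~\ref{thm1}. There is therefore no argument in the present paper to compare yours against.

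Your proof is correct and is essentially the standard route taken in the cited monograph: view $H$ as a real Hilbert space via the scalar-part pairing $\langle\tau_{e_0},(\cdot,\cdot)\rangle$, apply the classical Riesz theorem to $T_0=[\langle T,\cdot\rangle]_0$, and then reconstruct the full $\A$-valued functional componentwise from right $\A$-linearity and the identity $[\lambda]_A=[\lambda\,\bar e_A]_0$. The $2^n$ rescaling you track is precisely the artifact of the normalization $\langle\tau_{e_0},\mu\rangle=2^n\mu_0$ used here, and your remark that ``bounded'' must be read with respect to the Hilbert norm $\|\cdot\|$ (so that the classical Riesz theorem applies directly) is the right caveat to make explicit.
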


\subsection{Hilbert space of square integrable functions}\label{sub3}
Now we extend the standard Hilbert space of square integrable functions to Clifford algebra. First, we denote $L^1(\Omega,\mu)$ and $L^2(\Omega,\mu)$ be the sets of all integrable or square integrable functions defined on the domain $\Omega\in \R^{n+1}$ with respect to the measure $\mu$. Then $L^1(\Omega,\A,\mu)$ and $L^2(\Omega,\A,\mu)$ are defined as the sets of functions $f:\Omega\rightarrow \A$ which are integrable or square integrable with respect to $\mu$, i.e., if $f=\sum\limits_Af_Ae_A$, then for each $A$, $f_A\in L^1(\Omega,\mu)$ and $f^2_A\in L^1(\Omega,\mu)$, respectively. Then {\bf one may easily check that $L^1(\Omega,\A,\mu)$ and $L^2(\Omega,\A,\mu)$ are unitary bi-$\A$-module, i.e., unitary left-$\A$-module and unitary right-$\A$-module}. Furthermore, for any $f,g\in L^2(\Omega,\A,\mu)$, $\bar{f}\in L^2(\Omega,\A,\mu)$ while $\bar{f}g\in L^1(\Omega,\A,\mu)$, where $\bar{f}(x)=\overline{f(x)}$ and $(\bar{f}g)(x)=\bar{f}(x)g(x),~x\in \Omega$. Consider as a right $\A$-module, define for $f,g\in L^2(\Omega,\A,\mu)$ that
$$(f,g)=\int_{\Omega}\bar{f}(x)g(x)d\mu.$$
Furthermore for any real linear functional $T$ on $\A$
$$\langle T,(f,g)\rangle=\langle T,\int_{\Omega}\bar{f}(x)g(x)d\mu\rangle=\int_{\Omega}\langle T,\bar{f}(x)g(x)\rangle d\mu.$$
Consequently, taking $T=\tau_{e_0}$ we find that
\begin{equation} \label{eq:3}
\begin{split}
\langle \tau_{e_0},(f,f)\rangle&=\langle \tau_{e_0},\int_{\Omega}\bar{f}(x)f(x)d\mu\rangle=\int_{\Omega}\langle \tau_{e_0},\bar{f}(x)f(x)\rangle d\mu\\&=\int_{\Omega}|f(x)|^2_0d\mu.
 \end{split}\nonumber
 \end{equation}
Hence, for all $f\in L^2(\Omega,\A,\mu)$, $\langle \tau_{e_0},(f,f)\rangle\geq 0$ and $\langle \tau_{e_0},(f,f)\rangle=0$ if and only if $f=0$ a.e. in $\Omega$. Then it is easy to see that under the inner product defined, all conditions for $L^2(\Omega,\A,\mu)$ to be a unitary right inner product $\A$-module are satisfied. Since $L^2(\Omega,\A,\mu)=\prod_{A}L^2(\Omega,\mu)$, we have that $L^2(\Omega,\A,\mu)$ is complete; in other words $L^2(\Omega,\A,\mu)$ is a right Hilbert $\A$-module, with the norm $$\|f\|^2=\langle \tau_{e_0},(f,f)\rangle=\int_{\Omega}|f(x)|^2_0d\mu$$ for $f\in L^2(\Omega,\A,\mu)$.

\begin{defn}\label{defn10}{\bf (weighted $L^2$ space)}
Similar with $L^2(\Omega,\A,\mu)$, we can define the weighted $L^2(H,\A,\varphi)$ for a given function $\varphi\in C^2(\Omega, \R)$. First, let
$$L^2(\Omega,\varphi)=\big\{f|f:\Omega\rightarrow \R,~\int_\Omega|f(x)|^2e^{-\varphi}~dx<+\infty\big\}.$$Then we denote
$$L^2(H,\A,\varphi)=\{f|f:\Omega\rightarrow \A,~f=\sum\limits_Af_Ae_A,~f_A\in L^2(\Omega,\varphi)\}.$$
Moreover, for all $f,g\in L^2(H,\A,\varphi)$, we define
$$(f,g)_\varphi=\int_\Omega \bar{f}(x)g(x)e^{-\varphi}dx.$$
Then it is also easy to see $L^2(\Omega,\A,\varphi)$ is a right Hilbert $\A$-module, with the norm
\begin{equation} \label{norm}
\begin{split}
\|f\|^2=\langle \tau_{e_0},(f,f)_\varphi\rangle=\int_{\Omega}|f(x)|^2_0e^{-\varphi}dx
\end{split}\nonumber
 \end{equation}
 for $f\in L^2(\Omega,\A,\varphi)$.
\end{defn}

\subsection{Cauchy's integral formula}
Let $M$ be an (n+1)-dimensional differentiable and oriented manifold contained in some open subset $\Sigma$ of $\R^{n+1}$. By means of the n-forms
$$d\hat{x}_i=dx_0\wedge\cdots\wedge dx_{i-1}\wedge dx_{x_{i+1}}\wedge \cdots \wedge dx_n,~i=0,1,...,n,$$
an $\A$-valued n-form is introduced by putting
$$d\sigma=\sum_{i=0}^{n}(-1)^ie_id\hat{x}_i,$$ similarly, denote
$$d\bar{\sigma}=\sum_{i=0}^{n}(-1)^i\bar{e}_id\hat{x}_i.$$
Furthermore the volume-element $$dx=dx_0\wedge\cdots\wedge dx_n$$ is used.
\begin{prop}{\bf (Stokes-Green Theorem)}\cite{c} If $f,g\in C^1(\Sigma,\A)$ then for any (n+1)-chain $\Omega$ on $M\subset \Sigma$,
$$\int_{\partial\Omega}fd\sigma g=\int_\Omega(f\overline{D})gdx+\int_\Omega f(\overline{D}g)dx,$$
$$\int_{\partial\Omega}fd\bar{\sigma} g=\int_\Omega(fD)gdx+\int_\Omega f(Dg)dx.$$
\end{prop}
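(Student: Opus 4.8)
The plan is to derive both identities from the classical scalar-valued Stokes theorem applied to a single $\A$-valued $n$-form, treating the Clifford units as constant coefficients and letting the exterior derivative act componentwise on the real-valued coefficient functions.

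First I would introduce the $\A$-valued $n$-form
\[
\omega = f\, d\sigma\, g = \sum_{i=0}^{n}(-1)^i\,(f e_i g)\, d\hat{x}_i ,
\]
in which each product $f e_i g\in C^1(\Sigma,\A)$ is expanded in the basis $\{e_A\}$, so that $\omega$ becomes a finite real-linear combination of scalar $n$-forms with constant Clifford coefficients. Since $d$ commutes with multiplication by the constants $e_A$, I would compute
\[
d\omega = \sum_{i=0}^{n}(-1)^i\, d(f e_i g)\wedge d\hat{x}_i = \sum_{i=0}^{n}\sum_{j=0}^{n}(-1)^i\,\partial_{x_j}(f e_i g)\; dx_j\wedge d\hat{x}_i .
\]
Because $dx_j\wedge d\hat{x}_i = 0$ for $j\neq i$ and $dx_i\wedge d\hat{x}_i = (-1)^i\, dx$, only the diagonal terms survive and the two factors of $(-1)^i$ cancel, leaving $d\omega = \sum_{i=0}^{n}\partial_{x_i}(f e_i g)\, dx$.

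Next I would apply the Leibniz rule componentwise --- legitimate since each $e_i$ is constant and Clifford multiplication is $\R$-bilinear --- to write $\partial_{x_i}(f e_i g) = (\partial_{x_i} f)\,e_i\, g + f\, e_i\,(\partial_{x_i} g)$, keeping the order of the three factors fixed throughout. Summing over $i$ and recalling that $f\overline{D} = \sum_i (\partial_{x_i} f)e_i$ and $\overline{D}g = \sum_i e_i\,\partial_{x_i} g$ as defined in Subsection \ref{sub1}, I would obtain $d\omega = (f\overline{D})g\, dx + f(\overline{D}g)\, dx$. Applying the classical Stokes theorem to each of the finitely many real scalar $n$-forms into which $\omega$ decomposes, and recombining with the Clifford coefficients, would give $\int_{\partial\Omega}\omega = \int_\Omega d\omega$, which is exactly the first asserted identity. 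The second one should follow verbatim with $d\bar{\sigma} = \sum_{i=0}^{n}(-1)^i\bar{e}_i\, d\hat{x}_i$ in place of $d\sigma$: every $e_i$ is replaced by $\bar{e}_i$, so that $\overline{D}$ becomes $D = \sum_i \bar{e}_i\partial_{x_i}$ and $f\overline{D}$ becomes $fD = \sum_i (\partial_{x_i}f)\bar{e}_i$.

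The main thing to be careful about --- more a matter of bookkeeping than of ingenuity --- is the sign and orientation accounting: one must confirm $dx_i\wedge d\hat{x}_i = (-1)^i\, dx$ for the chosen ordering of coordinates so that this sign cancels the $(-1)^i$ appearing in the definition of $d\sigma$, and one must take care never to permute the noncommutative factors $f$, $e_i$, $g$. Beyond that, the only inputs needed are the $C^1$ hypothesis, which ensures $d\omega$ is continuous and hence integrable over the $(n+1)$-chain $\Omega$, and the classical Stokes theorem itself; no further analytic estimates enter.
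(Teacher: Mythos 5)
Your proof is correct. Note, however, that the paper does not prove this proposition itself; it cites it directly from Brackx--Delanghe--Sommen \cite{c}, so there is no in-paper argument to compare against. Your derivation is the standard one (and the one found in that reference): expand $f\,d\sigma\,g$ as an $\A$-valued $n$-form, compute $d\omega$ using $dx_j\wedge d\hat{x}_i=0$ for $j\neq i$ and $dx_i\wedge d\hat{x}_i=(-1)^i\,dx$, apply the Leibniz rule while preserving the order of the noncommuting factors $f$, $e_i$, $g$, and reduce to the classical scalar Stokes theorem componentwise. The sign bookkeeping and the observation that the $(-1)^i$ factors cancel are exactly the points that need care, and you have handled them correctly; the $D$ version follows by the same computation with $\bar{e}_i$ in place of $e_i$.
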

\begin{rem}
Denote $C^\infty_0(\Omega,\R)$ as the set of all smooth real-valued functions with compact support in $\Omega$ and $C^\infty_0(\Omega,\A):=\{f|f:\Omega\rightarrow \A,~f=\sum\limits_Af_Ae_A,~f_A\in C^\infty_0(\Omega,\R)\}.$ If $f$ or $g\in C^\infty_0(\Omega,\A)$, then we have from the Stokes-Green theorem that
$$\int_\Omega(f\overline{D})gdx=-\int_\Omega f(\overline{D}g)dx,$$
$$\int_\Omega(fD)gdx=-\int_\Omega f(Dg)dx.$$
\end{rem}
\begin{lem}
If $u(x)\in C^1(\Omega,\A)$, then $\overline{\overline{D}u}=\bar{u}D$.
\end{lem}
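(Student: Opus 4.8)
The plan is to verify the identity by a direct computation in the Clifford basis, exploiting two facts: the component functions of $u$ are real-valued (hence fixed by the involution and central), and the bar operation is an anti-automorphism, $\overline{\lambda\mu}=\bar\mu\bar\lambda$.

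First I would expand $u=\sum_A u_Ae_A$ with each $u_A:\Omega\to\R$ of class $C^1$. By the rule governing the left action of $\overline{D}=\sum_{i=0}^{n}e_i\partial_{x_i}$ we have
$$\overline{D}u=\sum_{i,A}e_ie_A\,\partial_{x_i}u_A.$$
Since each $\partial_{x_i}u_A$ is a real number, it is fixed by the involution and commutes with every basis element, so applying the involution term by term and using $\overline{e_ie_A}=\bar e_A\bar e_i$ yields
$$\overline{\overline{D}u}=\sum_{i,A}\overline{e_ie_A}\,\partial_{x_i}u_A=\sum_{i,A}\bar e_A\bar e_i\,\partial_{x_i}u_A.$$

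Next I would compute the right-hand side. From the definition of the involution, $\bar u=\sum_A u_A\bar e_A$, and by the rule for the right action of the conjugate operator $D=\sum_{i=0}^{n}\bar e_i\partial_{x_i}$,
$$\bar uD=\sum_{i,A}\bar e_A\bar e_i\,\partial_{x_i}u_A,$$
again using that $\partial_{x_i}u_A\in\R$. Comparing the two displays termwise gives $\overline{\overline{D}u}=\bar uD$, as claimed.

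There is essentially no analytic obstacle here; the one point that requires care is the ordering of factors. The bar is an anti-automorphism, so conjugating $\overline{D}u$ reverses $e_ie_A$ into $\bar e_A\bar e_i$, which is precisely the pattern produced by letting $D$ act on $\bar u$ from the right. The hypothesis $u\in C^1(\Omega,\A)$ is used only to ensure the partial derivatives exist so that the formal manipulations are legitimate; no boundary or growth conditions enter.
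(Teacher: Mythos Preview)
Your proof is correct and follows the same route as the paper: expand $u$ in the basis, apply the involution term by term using $\overline{e_ie_A}=\bar e_A\bar e_i$ and the reality of $\partial_{x_i}u_A$, and recognise the result as $\bar uD$. You have merely made explicit the justifications (anti-automorphism of the bar, centrality of real scalars) that the paper leaves implicit in its one-line computation.
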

\begin{proof}
Let $u(x)=\sum_{A}e_Au_A$. Then
\begin{equation} \label{eq:4}
\begin{split}
\overline{\overline{D}u}=\sum_{i,A}\overline{e_ie_A}\partial_{x_i}u_A
=\sum_{i,A}\bar{e}_A\bar{e}_i\partial_{x_i}u_A
=\bar{u}D.
 \end{split}\nonumber
 \end{equation}
\end{proof}

\begin{lem}\cite{c2}
If $u(x)=\sum_{A}e_Au_A$, $v(x)=\sum_{i=0}^{n}e_iv_i$, then
$$\overline{D}(uv)=(\overline{D}u)v+u(\overline{D}v)+\sum\limits^n_{j=1}(e_ju-ue_j)\partial_{x_j} v.$$
\end{lem}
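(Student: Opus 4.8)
We sketch how I would prove this Leibniz-type identity for the Dirac operator.

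The plan is to compute $\overline{D}(uv)$ directly from the definition $\overline{D}=\sum_{i=0}^{n}e_i\partial_{x_i}$, using that each $\partial_{x_i}$ is an ordinary scalar derivative acting on the real-valued component functions, so that the classical Leibniz rule applies coordinate-wise. Writing $u=\sum_{A}e_Au_A$ and $v=\sum_{k=0}^{n}e_kv_k$ with $u_A,v_k\in C^1(\Omega,\R)$, and using that Clifford multiplication is $\R$-bilinear with constant basis elements $e_A$, I would first record
\[\overline{D}(uv)=\sum_{i=0}^{n}e_i\,\partial_{x_i}(uv)=\sum_{i=0}^{n}e_i\big((\partial_{x_i}u)v\big)+\sum_{i=0}^{n}e_i\big(u(\partial_{x_i}v)\big).\]

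Next I would recognise the first sum as $(\overline{D}u)v$, which follows at once from $\sum_{i=0}^{n}e_i(\partial_{x_i}u)=\overline{D}u$ together with associativity of the Clifford product. The real content is in the second sum: the derivative falls on $v$, but the factor $e_i$ stands to the \emph{left} of $u$, whereas in $u(\overline{D}v)=\sum_{i=0}^{n}ue_i(\partial_{x_i}v)$ it stands to the \emph{right}. This is exactly where the noncommutativity of $\A$ enters, and it is what produces the extra term. I would therefore write $e_iu=ue_i+(e_iu-ue_i)$ and split
\[\sum_{i=0}^{n}e_i\big(u(\partial_{x_i}v)\big)=\sum_{i=0}^{n}ue_i(\partial_{x_i}v)+\sum_{i=0}^{n}(e_iu-ue_i)\,\partial_{x_i}v=u(\overline{D}v)+\sum_{i=0}^{n}(e_iu-ue_i)\,\partial_{x_i}v.\]

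Finally I would trim the range of summation in the commutator term: since $e_0=1$ is the unit of $\A$, the $i=0$ summand $e_0u-ue_0$ vanishes, so $\sum_{i=0}^{n}(e_iu-ue_i)\partial_{x_i}v=\sum_{j=1}^{n}(e_ju-ue_j)\partial_{x_j}v$; combining this with the previous two displays yields the asserted identity. I do not expect any essential obstacle here — the argument is elementary — and the only points requiring care are keeping track of the order of Clifford factors, which is precisely what forces the commutator sum to appear, and the removal of the $j=0$ index via the centrality of $e_0$. (It is worth noting that the hypothesis that $v$ be vector-valued is not actually used in this computation; it is imposed only because that is the form of the identity needed in the sequel.)
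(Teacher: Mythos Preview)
Your argument is correct: expanding $\overline{D}(uv)=\sum_{i}e_i\partial_{x_i}(uv)$, applying the scalar Leibniz rule componentwise, and then commuting $e_i$ past $u$ in the second sum is exactly what is needed, and your observation that the $i=0$ commutator vanishes because $e_0=1$ is the right way to restrict the extra sum to $j=1,\dots,n$. Your closing remark that the vector-valued hypothesis on $v$ is not actually used is also accurate.

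There is nothing to compare here: the paper does not supply its own proof of this lemma but simply cites it from \cite{c2}. Your direct verification therefore fills in what the paper omits, and it is precisely the standard elementary computation one would expect to find in that reference.
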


\subsection{Weak solutions}

Let $L_{loc}^1(\Omega,\A):=\{f|f:\Omega\rightarrow \A, ~f=\sum\limits_Af_Ae_A,~f_A\in L_{loc}^1(\Omega,\R)\}$. Then we define the weak solution in the sense of Clifford algebra as follows.
\begin{defn}\label{defn1}{\bf ($\overline{D}$ solution in weak sense)}
If $f\in L_{loc}^1(\Omega,\A)$, $u:\Omega \rightarrow \A$ is a weak solution of
$$\overline{D}u=f ~(\mbox{or}~{D}u=f)$$
if for any $\alpha\in C^\infty_0(\Omega,\A)$,
$$\int_{\Omega}\alpha f dx=-\int_{\Omega}(\alpha\overline{D})udx~(\mbox{or}~\int_{\Omega}\alpha f dx=-\int_{\Omega}(\alpha {D})udx).$$
\end{defn}
It should be noticed that if $u$ is a weak solution of Dirac equation $\overline{D}u=0$, in addition, if $u$ is smooth in $\Omega$, then it is left-monogenic.
Now it is natural to give the definition of $\Delta$ solution in the weak sense.
\begin{defn}\label{defn1.1}{\bf ($\Delta$  solution in weak sense)}
If $f\in L_{loc}^1(\Omega,\A)$, $u:\Omega \rightarrow \A$ is a weak solution of
$$\Delta u=f$$
if for any $\alpha\in C^\infty_0(\Omega,\A)$,
$$\int_{\Omega}\alpha f dx=\int_{\Omega}({\Delta}\alpha)udx.$$
\end{defn}

\begin{thm}\label{thm4}
If $f\in L_{loc}^1(\Omega,\A)$, and  $\overline{D}f=0$ in weak sense, then $f$ is left-monogenic  at any point of $\Omega$.
\end{thm}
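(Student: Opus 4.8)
The plan is to prove this by elliptic regularity, exploiting the factorization $\overline{D}D=D\overline{D}=\Delta$ to pass from the weak Dirac equation to the weak Laplace equation, and then invoking the classical Weyl lemma for $\Delta$. By Definition \ref{defn1}, the hypothesis means $\int_\Omega(\alpha\overline{D})f\,dx=0$ for every $\alpha\in C^\infty_0(\Omega,\A)$. The first step is to show this forces $\Delta f=0$ in the weak sense of Definition \ref{defn1.1}. Given $\beta\in C^\infty_0(\Omega,\A)$, the function $\beta D=\sum_{i=0}^n(\partial_{x_i}\beta)\bar{e}_i$ again lies in $C^\infty_0(\Omega,\A)$, and using $\partial_{x_i}\partial_{x_j}\beta=\partial_{x_j}\partial_{x_i}\beta$ together with the relations $\bar{e}_ie_j+\bar{e}_je_i=2\delta_{ij}$ one computes
\[
(\beta D)\overline{D}=\sum_{i,j}(\partial_{x_i}\partial_{x_j}\beta)\,\bar{e}_ie_j=\Delta\beta .
\]
Feeding $\alpha=\beta D$ into the hypothesis then yields $\int_\Omega(\Delta\beta)f\,dx=\int_\Omega((\beta D)\overline{D})f\,dx=0$ for all $\beta\in C^\infty_0(\Omega,\A)$, i.e. $\Delta f=0$ weakly.

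Next I would reduce to the real components and regularize. Writing $f=\sum_A f_Ae_A$ with $f_A\in L^1_{loc}(\Omega,\R)$ and testing $\Delta f=0$ against $\alpha=\phi e_0$, $\phi\in C^\infty_0(\Omega,\R)$, the linear independence of the $e_A$ gives $\int_\Omega(\Delta\phi)f_A\,dx=0$ for every $A$; hence each $f_A$ is a distributionally harmonic locally integrable function, so by the classical Weyl lemma it coincides a.e. in $\Omega$ with a (real-analytic) harmonic function $\tilde f_A$. Thus $\tilde f:=\sum_A\tilde f_Ae_A\in C^\infty(\Omega,\A)$ and $f=\tilde f$ a.e. Finally, since $\tilde f\in C^1(\Omega,\A)$ and $f=\tilde f$ a.e., for every $\alpha\in C^\infty_0(\Omega,\A)$ one has $0=\int_\Omega(\alpha\overline{D})f\,dx=\int_\Omega(\alpha\overline{D})\tilde f\,dx=-\int_\Omega\alpha(\overline{D}\tilde f)\,dx$, the last equality being the Stokes--Green identity from the Remark following the Stokes--Green theorem (applicable because $\alpha$ has compact support). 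As $\overline{D}\tilde f$ is continuous and $\int_\Omega\alpha(\overline{D}\tilde f)\,dx=0$ for all test $\alpha$, we conclude $\overline{D}\tilde f\equiv0$ on $\Omega$; that is, $\tilde f$ is left-monogenic, and $f$, being a.e. equal to $\tilde f$, has a representative that is left-monogenic at every point of $\Omega$.

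The main obstacle is the first step: one has to be careful that $\beta\mapsto\beta D$ does not leave $C^\infty_0(\Omega,\A)$ and that the right action composes exactly so that $(\beta D)\overline{D}=\Delta\beta$, not some reordered variant — it is precisely this that allows $\beta D$ to be used as an admissible test function against the hypothesis. Once weak harmonicity is in hand the remaining ingredients (interior regularity for $\Delta$ and one integration by parts) are classical. Equivalently, one may bypass Definition \ref{defn1.1} and argue by mollification: $f_\varepsilon:=f*\rho_\varepsilon$ satisfies $\overline{D}f_\varepsilon=0$, hence $\Delta f_\varepsilon=D\overline{D}f_\varepsilon=0$, so $f_\varepsilon$ is smooth and harmonic, and interior estimates then force $f_\varepsilon\to f$ in $C^1_{loc}$ with $\overline{D}f=0$ in the classical sense.
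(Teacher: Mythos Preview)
Your proof is correct and follows the same route as the paper: pass from $\overline{D}f=0$ weakly to $\Delta f=0$ weakly, invoke Weyl's lemma to get smoothness, and conclude $\overline{D}f=0$ classically. Your write-up is in fact more complete than the paper's sketch --- you justify the implication $\overline{D}f=0\Rightarrow\Delta f=0$ in the weak sense via the test function $\alpha=\beta D$ and the identity $(\beta D)\overline{D}=\Delta\beta$, and you close the loop from ``$f$ is smooth'' back to ``$\overline{D}f=0$ pointwise'', whereas the paper simply asserts both steps.
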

\begin{proof}: Since $\overline{D}f=0$ in weak sense, then $\Delta f=0$ in weak sense. By Weyl's lemma, $f$ is smooth in $\Omega$ and has
$\Delta f=0$ in classical sense, then of course $f$ is left-monogenic at any point of $\Omega$.
\end{proof}
\begin{rem}\label{rem1}
This is useful to deal with uniqueness of weak solutions.
for example, if $ u,~ v\in L_{loc}^1(\Omega,\A)$ are two weak solutions of $\overline{D }u=f$, then $ u=v+w$ with any $w$ left-monogenic.
\end{rem}

\begin{rem}
An important example of a left monogenic function  is the generalized Cauchy kernel $$G(x)=\frac{1}{\omega_{n+1}}\frac{\overline{x}}{|x|^{n+1}},$$
where $\omega_{n+1}$ denotes the surface area of the unit ball in $\R^{n+1}$. This function obviously belongs to $L_{loc}^1(\Omega,\A)$ and is a fundamental solution of the Dirac equation in the classical sense at any point of $\R^{n+1}$ except 0. However, it is not a weak solution of the Dirac operator. In fact, if it satisfies $\overline{D}f=0$ in the weak sense, then from Theorem \ref{thm4}, it must
be left-monogenic in the any point of $\Omega$ which could include $0$. Therefore, we get a contradiction.
\end{rem}


For $f\in L^2(\Omega,\A,\varphi)$, $u:\Omega \rightarrow \A$. If $\overline{D}u=f$, based on the Stokes-Green theorem, we can define the dual operator $\overline{D}^*_\varphi$ of $\overline{D}$ under the inner product of $L^2(\Omega,\A,\varphi)$. For any  $\alpha\in C^\infty_0(\Omega,\A)$,
\begin{equation}\label{7}
\begin{split}
(\alpha,f)_\varphi=&~\int_\Omega\bar{\alpha}fe^{-\varphi}dx=\int_\Omega\bar{\alpha}e^{-\varphi}fdx\\
=&~\int_\Omega(\bar{\alpha}e^{-\varphi})(\overline{D}u)dx\\
=&~-\int_\Omega\big((\bar{\alpha}e^{-\varphi})\overline{D}\big)udx\\
=&~-\int_\Omega\big((\bar{\alpha}e^{-\varphi})\overline{D}\big)e^\varphi ue^{-\varphi}dx\\
=&~\int_\Omega\overline{-e^{\varphi}D(\alpha e^{-\varphi})}ue^{-\varphi}dx\\
=&~(-e^{-\varphi}D(\alpha e^{-\varphi}),u)_\varphi\triangleq(\overline{D}^*_\varphi\alpha,u)_\varphi,
\end{split}
\end{equation}
where $\overline{D}^*_\varphi\alpha=-e^\varphi D(\alpha e^{-\varphi})=\alpha (D\varphi)-D\alpha$,
i.e. $$(\alpha,\overline{D}u)_\varphi=(\overline{D}^*_\varphi\alpha,u)_\varphi.$$ In the same way, we also have
$$(\overline{D}u,\alpha)_\varphi=(u,\overline{D}^*_\varphi\alpha)_\varphi.$$

\section{The proof of Theorem \ref{thm1}}
Now we are in the position of proving Theorem \ref{thm1}.
\begin{proof}($Sufficiency$) From the definition of dual operator and Cauchy-Schwarz inequality in Proposition \ref{prop1}, we have
\begin{equation}
\begin{split}
|(f,\alpha)_\varphi|^2_0
=&|(\overline{D}u,\alpha)_\varphi|^2_0
=|(u,\overline{D}^*_\varphi\alpha)_\varphi|^2_0\\
\leq&~\|u\|^2\cdot\|\overline{D}^*_\varphi\alpha\|^2\\
\leq&~c\cdot\|\overline{D}^*_\varphi\alpha\|^2.\nonumber
\end{split}
\end{equation}
~\\
($necessity$) We aim to prove the necessity with Riesz representation theorem. First, we denote the submodule
$$E=\{\overline{D}^*_\varphi\alpha,~\alpha\in C^\infty_0(\Omega,\A),~\varphi\in C^2(\Omega,\R)\}\subset L^2(\Omega,\A,\varphi).$$
Then we define a linear functional $L_f$ on $E$, i.e., $L_f\in E^{*alg}$ for a fixed $f\in  L^2(\Omega,\A,\varphi)$ as follows,
$$\langle L_f,\overline{D}^*_\varphi\alpha\rangle=(f,\alpha)_\varphi=\int_\Omega\bar{f}\cdot\alpha\cdot e^{-\varphi}dx\in \A.$$
From (\ref{eq:5}), we have
$$|\langle L_f,\overline{D}^*_\varphi\alpha\rangle|_0=|(f,\alpha)_\varphi|_0\leq\sqrt{c}\cdot\|\overline{D}^*_\varphi\alpha\|,$$
which meas that $L_f$ is a bounded functional from Definition \ref{bounded}. By the Hahn-Banach type theorem in Theorem \ref{Hahn}, $L_f$ can be extended to  a linear functional $\widetilde{L}_f$ on $L^2(\Omega,\A,\varphi)$, and with
\begin{equation}\label{eq:6}
\begin{split}|\langle \widetilde{L}_f,g\rangle|_0\leq\sqrt{c^*}\|g\|,~\forall g\in L^2(\Omega,\A,\varphi),\end{split}
\end{equation}
where $\sqrt{c^*}=\sqrt{c}\cdot|e_0|_0$, {since} $|e_A|_0=2^{n/2}$, then $c^*=2^{n}c$ from \cite{c}. Now we are in the position to use the Riesz representation theorem for the operator $\widetilde{L}_f$. From Theorem \ref{Riesz}, there exists a $u\in L^2(\Omega,\A,\varphi)$ such that
\begin{equation}\label{eq:7}
\begin{split}\langle \widetilde{L}_f,g\rangle=(u,g)_\varphi,~\forall g\in L^2(\Omega,\A,\varphi).\end{split}
\end{equation}

For $\forall \alpha\in C^\infty_0(\Omega,\A)$, let $g=\overline{D}^*_\varphi\alpha$. Then
\begin{equation}\label{}
\begin{split}
(f,\alpha)_\varphi=&\langle  \widetilde{L}_f,\overline{D}^*_\varphi\alpha\rangle =(u,\overline{D}^*_\varphi\alpha)_\varphi=(\overline{D}u,\alpha)_\varphi,\nonumber
\end{split}
\end{equation}
which deduces that
$$\int_\Omega\bar{f}\alpha e^{-\varphi}dx=\int_\Omega\overline{(\overline{D}u)}{\alpha} e^{-\varphi}dx.$$ Conjugating both sides of above equation leads to
$$\int_\Omega\bar{\alpha}f \cdot e^{-\varphi}dx=\int_\Omega\bar{\alpha} (\overline{D})u  e^{-\varphi}dx.$$
Let $\alpha=\bar{\alpha}e^{\varphi}$,   it can be obtained that
$$\int_\Omega\alpha  fdx=\int_\Omega\alpha (\overline{D}u)dx,~\forall \alpha\in C^\infty_0(\Omega,\A).$$
Therefore, $$\overline{D}u=f$$ is proved from the definition of weak solutions.

Next, we give the bound for the norm of $u$. Let $g=u=\sum_{A}e_Au_A\in L^2(\Omega,\A,\varphi)$, from (\ref{eq:6}) and (\ref{eq:7}), we get that
\begin{equation}\label{eq:8}
\begin{split}|(u,u)_\varphi|_0\leq\sqrt{c^*}\|u\|.\end{split}
\end{equation}
On the other hand,
\begin{equation}
\begin{split}
|(u,u)_\varphi|_0^2=&\big|\int_\Omega\bar{u}ue^{-\varphi}dx\big|^2_0\\
=&~2^n\cdot\big[\int_\Omega\bar{u}ue^{-\varphi}dx\cdot\overline{\int_\Omega\bar{u}ue^{-\varphi}dx}\big]_0\\
=&~2^n\big[\int_\Omega(\sum\limits_Au^2_A+\sum\limits_{A\neq B}\bar{e}_Ae_Bu_Au_B)e^{-\varphi}dx\cdot\overline{\int_\Omega(\sum\limits_Au^2_A+\sum\limits_{A\neq B}\bar{e}_Ae_Bu_Au_B)e^{-\varphi}dx}\big]_0\\
=&~2^n\big[(\int_\Omega\sum\limits_Au^2_Ae^{-\varphi}dx)^2+(\int_\Omega\sum\limits_{A\neq B}u_Au_Be^{-\varphi}dx)^2\big],
\end{split}\nonumber
\end{equation}
and
\begin{equation}
\begin{split}
\|u\|^2=&~\int_\Omega|u|^2_0e^{-\varphi}dx
=2^n\int_\Omega[\bar{u}u]_0e^{-\varphi}dx
=2^n\int_\Omega\sum\limits_Au^2_A\cdot e^{-\varphi}dx
\end{split}\nonumber
\end{equation}
So we have $\|u\|^4=2^{2n}\cdot(\int_\Omega\sum\limits_Au^2_A\cdot e^{-\varphi}dx)^2$. Hence,
$$|(u,u)_\varphi|_0^2=2^n[(\int_\Omega\sum\limits_Au^2_A\cdot e^{-\varphi}dx)^2+(\int_\Omega\sum\limits_{A\neq B}u_Au_Be^{-\varphi}dx)^2]\geq 2^{-n}\|u\|^4.$$
Combining with (\ref{eq:8}), it is obtained that
$$\|u\|^2\leq 2^{n/2}|(u,u)_\varphi|_0\leq2^{n/2}\sqrt{c^*}\|u\|,$$ and
$$\|u\|^2\leq 2^{2n} {c}.$$
The proof is completed.
\end{proof}

\section{The proof of Theorem \ref{thm2}}
It should be noticed that inequality (\ref{eq:5}) in Theorem \ref{thm1} is related with $\alpha\in C^\infty_0(\Omega,\A)$. In the following, we will give another sufficient condition that has nothing to do with the space $C^\infty_0(\Omega,\A)$. First, we need to compute the norm of $\|\overline{D}^*_\varphi\alpha\|$ for any $\alpha\in C^\infty_0(\Omega,\A).$

\begin{equation}
\begin{split}
\|\overline{D}^*_\varphi\alpha\|^2=&\int_\Omega|\overline{D}^*_\varphi\alpha|^2_0e^{-\varphi}dx\\
=&\int_\Omega\langle\tau_{e_0},\overline{\overline{D}^*_\varphi\alpha}\cdot\overline{D}^*_\varphi\alpha\rangle e^{-\varphi}dx\\
=&\langle\tau_{e_0},\int_\Omega\overline{\overline{D}^*_\varphi\alpha}\cdot\overline{D}^*_\varphi\alpha e^{-\varphi}dx\rangle\\
=&\langle\tau_{e_0},(\overline{D}^*_\varphi\alpha,\overline{D}^*_\varphi\alpha)_\varphi\rangle\\
=&\langle\tau_{e_0},(\alpha,\overline{D}\overline{D}^*_\varphi\alpha)_\varphi\rangle\\
=&\langle\tau_{e_0},(\alpha,\overline{D}(\alpha (D\varphi)-D\alpha))_\varphi\rangle\\
=&\langle\tau_{e_0},(\alpha,\overline{D}\alpha (D\varphi)+\alpha\Delta\varphi-\Delta\alpha+\sum\limits^n_{j=1}(e_j\alpha-\alpha e_j)\frac{\partial}{\partial x_j}(D\varphi))_\varphi\rangle\\
=&\langle\tau_{e_0},(\alpha,\overline{D}^*_\varphi(\overline{D}\alpha)+\alpha\Delta\varphi+\sum\limits^n_{j=1}(e_j\alpha-\alpha e_j)\frac{\partial}{\partial x_j}(D\varphi))_\varphi\rangle\\
=&\langle\tau_{e_0},(\alpha,\overline{D}^*_\varphi(\overline{D}\alpha))_\varphi+(\alpha,\alpha\Delta\varphi)_\varphi+(\alpha,\sum\limits^n_{j=1}(e_j\alpha-\alpha e_j)\frac{\partial}{\partial x_j}(D\varphi))_\varphi\rangle\\
=&\langle\tau_{e_0},(\alpha,\overline{D}^*_\varphi(\overline{D}\alpha))_\varphi\rangle+\langle\tau_{e_0},(\alpha,\alpha\Delta\varphi)_\varphi\rangle+\langle\tau_{e_0},(\alpha,\sum\limits^n_{j=1}(e_j\alpha-\alpha e_j)\frac{\partial}{\partial x_j}(D\varphi))_\varphi\rangle\\
=&I_1+I_2+I_3,\nonumber
\end{split}
\end{equation}
where
\begin{equation}
\begin{split}
I_1=&\langle\tau_{e_0},(\alpha,\overline{D}^*_\varphi(\overline{D}\alpha))_\varphi\rangle=\langle\tau_{e_0},(\overline{D}\alpha,\overline{D}\alpha)_\varphi\rangle=\|\overline{D}\alpha\|^2,\\
I_2=&\langle\tau_{e_0},(\alpha,\alpha\Delta\varphi)_\varphi\rangle=\int_{\Omega}|\alpha|^2_0\Delta\varphi e^{-\varphi}dx, \nonumber
\end{split}
\end{equation}
and
\begin{equation}
\begin{split}
I_3=&\langle\tau_{e_0},(\alpha,\sum\limits^n_{j=1}(e_j\alpha-\alpha e_j)\frac{\partial}{\partial x_j}(D\varphi))_\varphi\rangle\\
=&\langle\tau_{e_0},(\alpha,\sum\limits^n_{j=1}(e_j\alpha-\alpha e_j)\frac{\partial}{\partial x_j}(\sum_{i=0}^{n}\bar{e}_i\frac{\partial \varphi}{\partial x_i}))_\varphi\rangle\\
=&\langle\tau_{e_0},(\alpha,\sum\limits^n_{j=1}\sum_{i=0}^{n}(e_j\alpha \bar{e}_i-\alpha e_j \bar{e}_i)\frac{\partial^2 \varphi}{\partial x_j\partial x_i})_\varphi\rangle\\
=&\langle\tau_{e_0},\int_\Omega \bar{\alpha}\sum\limits^n_{j=1}\sum_{i=0}^{n}(e_j\alpha \bar{e}_i-\alpha e_j \bar{e}_i)\frac{\partial^2 \varphi}{\partial x_j\partial x_i} e^{-\varphi}dx\rangle\\
=&\int_\Omega\langle\tau_{e_0}, \bar{\alpha}\sum\limits^n_{j=1}\sum_{i=0}^{n}(e_j\alpha \bar{e}_i-\alpha e_j \bar{e}_i)\frac{\partial^2 \varphi}{\partial x_j\partial x_i}\rangle e^{-\varphi}dx.
\end{split}\nonumber
\end{equation}
{\bf It should be noticed that if $n=1$, i.e., the space $\R^2$ is considered, then $I_3=0.$}

Since for $1\leq i,j\leq n$ and $i\neq j$, $e_j \bar{e}_i=-e_j {e}_i=e_i{e}_j=- {e}_i\bar{e}_j$. For simplicity, let
\begin{equation}
\begin{split}
I_4=&\langle\tau_{e_0},\bar{\alpha}\sum\limits^n_{j=1}\sum\limits_{i=0}^{n}(e_j\alpha \bar{e}_i-\alpha e_j \bar{e}_i)\frac{\partial^2 \varphi}{\partial x_j\partial x_i}\rangle\\
=&\langle\tau_{e_0},\sum\limits^n_{j=1}\sum\limits_{i=1}^{n}(\bar{\alpha}e_j\alpha \bar{e}_i-\bar{\alpha}\alpha e_j \bar{e}_i)\frac{\partial^2 \varphi}{\partial x_j\partial x_i}\rangle+\langle\tau_{e_0},\sum\limits^n_{j=1}(\bar{\alpha}e_j\alpha \bar{e}_0-\bar{\alpha}\alpha e_j \bar{e}_0)\frac{\partial^2 \varphi}{\partial x_j\partial x_0}\rangle\\
=&\langle\tau_{e_0},\sum\limits_{i=1}^{n}(\bar{\alpha}e_i\alpha \bar{e}_i-\bar{\alpha}\alpha e_i \bar{e}_i)\frac{\partial^2 \varphi}{\partial x^2_i}\rangle+\langle\tau_{e_0},\sum\limits^n_{j\neq i}(\bar{\alpha}e_j\alpha \bar{e}_i)\frac{\partial^2 \varphi}{\partial x_j\partial x_i}\rangle\\
&+\langle\tau_{e_0},\sum\limits^n_{j=1}(\bar{\alpha}e_j\alpha \bar{e}_0-\bar{\alpha}\alpha e_j \bar{e}_0)\frac{\partial^2 \varphi}{\partial x_j\partial x_0}\rangle\\
=&\langle\tau_{e_0},\sum\limits_{i=1}^{n}(\bar{\alpha}e_i\alpha \bar{e}_i-\bar{\alpha}\alpha )\frac{\partial^2 \varphi}{\partial x^2_i}\rangle+\langle\tau_{e_0},\sum\limits^n_{j\neq i}(\bar{\alpha}e_j\alpha \bar{e}_i)\frac{\partial^2 \varphi}{\partial x_j\partial x_i}\rangle\\
&+\langle\tau_{e_0},\sum\limits^n_{j=1}(\bar{\alpha}e_j\alpha \bar{e}_0-\bar{\alpha}\alpha e_j \bar{e}_0)\frac{\partial^2 \varphi}{\partial x_j\partial x_0}\rangle\\
=&I_5+I_6+I_7.
\end{split}\nonumber
\end{equation}
Assume $\alpha=\sum\limits_A\alpha_Ae_A\in \A,~\bar{\alpha}=\sum\limits_A\alpha_A\bar{e}_A$, then for any $1\leq i\leq n,$
\begin{equation}
\begin{split}
\bar{\alpha}e_i\alpha\bar{e}_i=&~\sum\limits_A\alpha_A\bar{e}_Ae_i\cdot\sum\limits_A\alpha_Ae_A\bar{e}_i\\
=&~\sum\limits_A(-1)^{\frac{|A|(|A|+1)}{2}}\alpha_Ae_Ae_i\cdot\sum\limits_A(-1)\alpha_Ae_Ae_i
\end{split}\nonumber
\end{equation}
Therefore
\begin{equation}
\begin{split}
I_5=&\langle\tau_{e_0},\sum\limits_{i=1}^{n}(\bar{\alpha}e_i\alpha \bar{e}_i-\bar{\alpha}\alpha )\frac{\partial^2 \varphi}{\partial x^2_i}\rangle\\
=&\langle\tau_{e_0},\sum\limits_{i=1}^{n}(\bar{\alpha}e_i\alpha \bar{e}_i)\frac{\partial^2 \varphi}{\partial x^2_i}\rangle-\langle\tau_{e_0},\sum\limits_{i=1}^{n}(\bar{\alpha}\alpha )\frac{\partial^2 \varphi}{\partial x^2_i}\rangle\\
=&\langle\tau_{e_0},\sum\limits_{i=1}^{n}(\sum\limits_A(-1)^{\frac{|A|(|A|+1)}{2}}\alpha_Ae_Ae_i\cdot\sum\limits_A(-1)\alpha_Ae_Ae_i)\frac{\partial^2 \varphi}{\partial x^2_i}\rangle-\langle\tau_{e_0},\sum\limits_{i=1}^{n}(\bar{\alpha}\alpha )\frac{\partial^2 \varphi}{\partial x^2_i}\rangle\\
=&2^n\sum\limits_{i=1}^{n}(\sum\limits_A(-1)^{\frac{|A|(|A|+1)}{2}+1}\alpha_A^2 e_Ae_ie_Ae_i)\frac{\partial^2 \varphi}{\partial x^2_i}-\sum\limits_{i=1}^{n}|\alpha|^2_0\frac{\partial^2 \varphi}{\partial x^2_i}\\
=&2^n\sum\limits_{i=1}^{n}(\sum\limits_{i\not\in A}(-1)^{\frac{|A|(|A|+1)}{2}+1}\alpha^2_A\cdot\overline{e_Ae_i}\cdot e_Ae_i\cdot
(-1)^{\frac{(|A|+1)(|A|+2)}{2}}\\
&+\sum\limits_{i\in A}(-1)^{\frac{|A|(|A|+1)}{2}+1}\cdot\alpha^2_A\cdot\overline{e_{A-{i}}}\cdot
e_{A-{i}}\cdot(-1)^{\frac{(|A|-1)(|A|)}{2}})\frac{\partial^2 \varphi}{\partial x^2_i}-\sum\limits_{i=1}^{n}|\alpha|^2_0\frac{\partial^2 \varphi}{\partial x^2_i}\\
=&2^n\sum\limits_{i=1}^{n}(\sum\limits_{i\not\in A}(-1)^{\frac{|A|(|A|+1)}{2}+1+\frac{(|A|+1)(|A|+2)}{2}}\cdot\alpha^2_A\\
&+\sum\limits_{i\in A}(-1)^{\frac{|A|(|A|+1)}{2}+1+\frac{(|A|-1)(|A|)}{2}}\cdot\alpha^2_A)\frac{\partial^2 \varphi}{\partial x^2_i}-\sum\limits_{i=1}^{n}|\alpha|^2_0\frac{\partial^2 \varphi}{\partial x^2_i}\\
=&2^n\sum\limits_{i=1}^{n}(\sum\limits_{i\not\in A}(-1)^{|A|^2}\cdot\alpha^2_A+\sum\limits_{i\in A}(-1)^{|A|^2+1}\cdot\alpha^2_A)\frac{\partial^2 \varphi}{\partial x^2_i}-\sum\limits_{i=1}^{n}|\alpha|^2_0\frac{\partial^2 \varphi}{\partial x^2_i}\\
=&2^n\sum\limits_{i=1}^{n}(\sum\limits_{i\not\in A,|A|^2 ~\mbox{is odd}}(-2)\alpha^2_A+\sum\limits_{i\in A,|A|^2 ~\mbox{is even}}(-2)\alpha^2_A)\frac{\partial^2 \varphi}{\partial x^2_i}\\
=&-2^{n+1}\sum\limits_{i=1}^{n}(\sum\limits_{i\not\in A,|A|^2 ~\mbox{is odd}}\alpha^2_A+\sum\limits_{i\in A,|A|^2 ~\mbox{is even}}\alpha^2_A)\frac{\partial^2 \varphi}{\partial x^2_i}.
\end{split}
\end{equation}
To consider $I_7$, we first study $\bar{\alpha}e_j\alpha$ for any $1\leq j\leq n$. Without loss of generality, let $e_j=e_1,~\bar{\alpha}=\sum\limits_A\alpha_A\bar{e}_A,~
\alpha=\sum\limits_A\alpha_Ae_A$. Then $\bar{\alpha}e_1\alpha=(\sum\limits_A\alpha_A\bar{e}_A)e_1(\sum\limits_A\alpha_Ae_A)$.

When $e_A=e_1e_{h_2}e_{h_3}\cdots e_{h_r}$,~where $1<h_2<h_3<\cdots<h_r$ and $1<r\leq n.$
\begin{equation}\label{16}
\begin{split}
\alpha_A\bar{e}_Ae_1=&\alpha_{1h_2\cdots h_r}(-1)^{\frac{r(r+1)}{2}}\cdot e_1e_{h_2}e_{h_3}\cdots e_{h_r}\cdot e_1
\\=&\alpha_{1h_2\cdots h_r}(-1)^{\frac{r(r+1)}{2}+r}e_{h_2}e_{h_3}\cdots e_{h_r}\\
\alpha_Ae_Ae_1=&\alpha_{1h_2\cdots h_r}e_1e_{h_2}\cdots e_{h_r}\cdot e_1=\alpha_{1h_2\cdots h_r}(-1)^re_{h_2}\cdots e_{h_r}.
\end{split}
\end{equation}

When $e_A=e_1$,
\begin{equation}\label{165}
\begin{split}
\alpha_A\bar{e}_Ae_1=&\alpha_{1}\\
\alpha_Ae_Ae_1=&-\alpha_{1}.
\end{split}
\end{equation}

When $e_A=e_{h_2}e_{h_3}\cdots e_{h_r}$,~where $1<h_2<h_3<\cdots<h_r$ and $1<r\leq n.$
\begin{equation}\label{17}
\begin{split}
\alpha_A\bar{e}_Ae_1=&\alpha_{h_2\cdots h_r}(-1)^{\frac{(r-1)(r)}{2}}\cdot e_{h_2}e_{h_3}\cdots e_{h_r}\cdot e_1
\\=&\alpha_{h_2\cdots h_r}(-1)^{\frac{(r-1)(r)}{2}+r-1}e_1e_{h_2}\cdots e_{h_r}\\
\alpha_Ae_Ae_1=&\alpha_{h_2\cdots h_r}e_{h_2}\cdots e_{h_r}\cdot e_1=\alpha_{h_2\cdots h_r}(-1)^{r-1}e_1e_{h_2}\cdots e_{h_r}.
\end{split}
\end{equation}

When $e_A=e_0$,
\begin{equation}\label{175}
\begin{split}
\alpha_A\bar{e}_Ae_1=&\alpha_{0}e_1\\
\alpha_Ae_Ae_1=&\alpha_{0}e_1.
\end{split}
\end{equation}
To compute $I_7$, one needs to know the coefficient for $e_0$ of $\bar{\alpha}e_1\alpha-\bar{\alpha}\alpha e_1$. It means that we should find out the corresponding terms of $e_1e_{h_2}e_{h_3}\cdots e_{h_r}$ and $e_{h_2}\cdots e_{h_r}$ in $\bar{\alpha}e_1$ and $\alpha$, in $\bar{\alpha}$ and $\alpha e_1$.

{\bf Case a1.} For $\bar{\alpha}e_1\alpha$, from (\ref{17}), the corresponding terms of $e_1e_{h_2}e_{h_3}\cdots e_{h_r}$ with $1<h_2<h_3<\cdots<h_r$ and $1<r\leq n$ in $\bar{\alpha}e_1=(\sum\limits_A\alpha_A\bar{e}_A)e_1$ and $\alpha=\sum\limits_A\alpha_Ae_A$ are $\alpha_{h_2\cdots h_r}(-1)^{\frac{(r-1)(r)}{2}+r-1}e_1e_{h_2}\cdots e_{h_r}$ and $\alpha_{1h_2\cdots h_r}e_1e_{h_2}\cdots e_{h_r}$, respectively. Multiplying these terms leads to
\begin{equation}\label{18}
\begin{split}
(-1)&^{\frac{(r-1)(r)}{2}+r-1}e_1e_{h_2}\cdots e_{h_r}\cdot e_1e_{h_2}\cdots e_{h_r}\cdot\alpha_{1h_2\cdots h_r}\cdot\alpha_{h_2\cdots h_r}\\
=&~(-1)^{\frac{(r-1)(r)}{2}+r-1}(-1)^{\frac{(r)(r+1)}{2}}\cdot\overline{e_1\cdots e_{h_r}}\cdot e_1e_{h_2}\cdots e_{h_r} \cdot\alpha_{1h_2\cdots h_r}\alpha_{h_2\cdots h_r}\\
=&~(-1)^{\frac{(r)(r+1)}{2}+r-1+\frac{(r-1)(r)}{2}}\cdot\alpha_{1h_2\cdots h_r}\alpha_{h_2\cdots h_r}.
\end{split}
\end{equation}
On the other hand, for $\bar{\alpha}e_1\alpha$, from (\ref{16}), the corresponding terms of $e_{h_2}e_{h_3}\cdots e_{h_r}$ with $1<h_2<h_3<\cdots<h_r$ and $1<r\leq n$ in $\bar{\alpha}e_1$ and $\alpha$ are $\alpha_{1h_2\cdots h_r}(-1)^{\frac{r(r+1)}{2}+r}e_{h_2}e_{h_3}\cdots e_{h_r}$ and $\alpha_{h_2\cdots h_r}e_{h_2}\cdots e_{h_r}$, respectively. Multiplying these terms leads to
\begin{equation}\label{19}
\begin{split}
(-1)&^{\frac{(r)(r+1)}{2}+r}e_{h_2\cdots h_r}\cdot e_{h_2\cdots h_r}\cdot\alpha_{1h_2\cdots h_r}
\cdot\alpha_{h_2\cdots h_r}\\
=&~(-1)^{\frac{(r)(r+1)}{2}+r}(-1)^{\frac{(r-1)(r)}{2}}\cdot\overline{e_{h_2\cdots h_r}}\cdot e_{h_2\cdots h_r} \cdot\alpha_{1h_2\cdots h_r}\alpha_{h_2\cdots h_r}\\
=&~(-1)^{\frac{(r)(r+1)}{2}+r+\frac{(r-1)(r)}{2}}\cdot\alpha_{1h_2\cdots h_r}\alpha_{h_2\cdots h_r}.
\end{split}
\end{equation}
From (\ref{18}) and (\ref{19}), these two terms vanish.

{\bf Case a2.} For $\bar{\alpha}e_1\alpha$, from (\ref{175}), the corresponding terms of $e_1$ in $\bar{\alpha}e_1$ and $\alpha$ are $\alpha_{0}e_1$ and $\alpha_{1}e_1$, respectively. Multiplying these terms leads to
\begin{equation}\label{185}
\begin{split}
\alpha_{0}e_1\alpha_{1}e_1=-\alpha_{0}\alpha_{1}.
\end{split}
\end{equation}
On the other hand, for $\bar{\alpha}e_1\alpha$, from (\ref{165}), the corresponding terms of $e_{0}$ in $\bar{\alpha}e_1$ and $\alpha$ are $\alpha_{1}$ and $\alpha_{0}$, respectively. Multiplying these terms leads to $\alpha_{0}\alpha_{1}$. Combining with (\ref{185}), these two terms also vanish.

From Cases a1 and a2, one can obtain that the coefficient for $e_0$ of $\bar{\alpha}e_1\alpha$ equals zero, i.e.,
\begin{equation}\label{23}
\begin{split}\langle\tau_{e_0},\sum\limits^n_{j=1}(\bar{\alpha}e_j\alpha \bar{e}_0)\frac{\partial^2 \varphi}{\partial x_j\partial x_0}\rangle=0.\end{split}
\end{equation}

{\bf Case b1.} For $\bar{\alpha}\alpha e_1$, from (\ref{17}), the corresponding terms of $e_1e_{h_2}e_{h_3}\cdots e_{h_r}$ with $1<h_2<h_3<\cdots<h_r$ and $1<r\leq n$ in ${\alpha}e_1=(\sum\limits_A\alpha_A{e}_A)e_1$ and $\bar{\alpha}=\sum\limits_A\alpha_A\bar{e}_A$ are $\alpha_{h_2\cdots h_r}(-1)^{r-1}e_1e_{h_2}\cdots e_{h_r}$ and $\alpha_{1h_2\cdots h_r}\overline{e_1e_{h_2}\cdots e_{h_r}}$, respectively. Multiplying these terms leads to
\begin{equation}\label{25}
\begin{split}
(\alpha_{1h_2\cdots h_r}&\overline{e_1e_{h_2}\cdots e_{h_r}})\cdot(\alpha_{h_2\cdots h_r}e_{h_2}\cdots e_{h_r}\cdot e_1)\\
=&~(\alpha_{1h_2\cdots h_r}\overline{e_1e_{h_2}\cdots e_{h_r}})\cdot((-1)^{r-1}e_1e_{h_2}\cdots e_{h_r}\cdot \alpha_{h_2\cdots h_r})\\
=&~(-1)^{r-1}\alpha_{1h_2\cdots h_r}\cdot\alpha_{h_2\cdots h_r}.
\end{split}
\end{equation}
On the other hand, for $\bar{\alpha}\alpha e_1$, from (\ref{16}), the corresponding terms of $e_{h_2}e_{h_3}\cdots e_{h_r}$ with $1<h_2<h_3<\cdots<h_r$ and $1<r\leq n$ in ${\alpha}e_1$ and $\bar{\alpha}$ are $\alpha_{1h_2\cdots h_r}(-1)^re_{h_2}\cdots e_{h_r}$ and $\alpha_{h_2\cdots h_r}\overline{e_{h_2}\cdots e_{h_r}}$, respectively. Multiplying these terms leads to
\begin{equation}\label{26}
\begin{split}
(\alpha_{h_2\cdots h_r}&\overline{e_{h_2}\cdots e_{h_r}})\cdot(\alpha_{1h_2\cdots h_r}e_1\cdots e_{h_r}\cdot e_1)\\
=&~(\alpha_{h_2\cdots h_r}\overline{e_{h_2}\cdots e_{h_r}})\cdot((-1)^re_{h_2}\cdots e_{h_r}\cdot \alpha_{1h_2\cdots h_r})\\
=&~(-1)^r\alpha_{h_2\cdots h_r}\cdot\alpha_{1h_2\cdots h_r}.
\end{split}
\end{equation}
From (\ref{25}) and (\ref{26}), these two terms vanish.

{\bf Case b2.} For $\bar{\alpha}\alpha e_1$, from (\ref{175}), the corresponding terms of $e_1$ in ${\alpha}e_1$ and $\bar{\alpha}$ are $\alpha_{0}e_1$ and $\alpha_{1}\bar{e}_1$, respectively. Multiplying these terms leads to
\begin{equation}\label{1855}
\begin{split}
\alpha_{0}e_1\alpha_{1}\bar{e}_1=\alpha_{0}\alpha_{1}.
\end{split}
\end{equation}
On the other hand, for $\bar{\alpha}\alpha e_1$, from (\ref{165}), the corresponding terms of $e_{0}$ in ${\alpha}e_1$ and $\bar{\alpha}$ are $-\alpha_{1}$ and $\alpha_{0}$, respectively. Multiplying these terms leads to $-\alpha_{0}\alpha_{1}$. Combining with (\ref{1855}), these two terms also cancel.

From Cases b1 and b2, one can obtain that the coefficient for $e_0$ of $\bar{\alpha}e_1\alpha$ equals zero, i.e.,
\begin{equation}\label{24}
\begin{split}\langle\tau_{e_0},\sum\limits^n_{j=1}(\bar{\alpha}\alpha e_j\bar{e}_0)\frac{\partial^2 \varphi}{\partial x_j\partial x_0}\rangle=0.\end{split}
\end{equation}
{\bf Thus, $I_7=0$ from (\ref{23}) and (\ref{24}).}

To compute $I_6$, i.e., to get $[\bar{\alpha}e_i\alpha \bar{e}_j]_0$ for $i\neq j$, similar with the analysis of $I_7$, we should divide the vectors in $\bar{\alpha}e_i$ and $\alpha \bar{e}_j$ into four
cases.

{\bf Case c1.} $i\in A,~j\not\in A$ for $e_A$ in $\bar{\alpha}$ and $i\not\in B,~j\in B$ for $e_B$ in ${\alpha}$ with $A-{i}=B-{j}$.

For this case, firstly, we assume $e_A=e_{h_1\cdots h_{p(i)}\cdots h_r}$ and $h_{p(i)}=i$, $e_B=e_{h_1\cdots h_{p(j)}\cdots h_r}$ and $h_{p(j)}=j$.
We have
\begin{equation}\label{}
\begin{split}
\alpha_A\bar{e}_Ae_i=&\alpha_{A}(-1)^{\frac{r(r+1)}{2}}\cdot e_{h_1}\cdots e_i\cdots e_{h_r}\cdot e_i\\
=&\alpha_{A}(-1)^{\frac{r(r+1)}{2}+r-p(i)} e_{h_1}\cdots e_i^2\cdots e_{h_r},\\
=&\alpha_{A}(-1)^{\frac{r(r+1)}{2}+r-p(i)+1} e_{A-{i}},\\
\alpha_B{e}_B\bar{e}_j=&\alpha_{B} e_{h_1}\cdots e_j\cdots e_{h_r}\cdot \bar{e}_j\\
=&\alpha_{B}(-1)^{r-p(j)} e_{h_1}\cdots e_j\bar{e}_j\cdots e_{h_r},\\
=&\alpha_{B}(-1)^{r-p(j)} e_{B-{j}}.
\end{split}\nonumber
\end{equation}
Then
\begin{equation}\label{}
\begin{split}
\alpha_A\bar{e}_Ae_i\alpha_B{e}_B\bar{e}_j=&\alpha_{A}(-1)^{\frac{r(r+1)}{2}+r-p(i)+1} e_{A-{i}}\alpha_{B}(-1)^{r-p(j)} e_{B-{j}}\\
=&\alpha_{A}\alpha_{B}(-1)^{\frac{r(r+1)}{2}+r-p(i)+1+r-p(j)+\frac{r(r-1)}{2}} \overline{e_{A-{i}}} e_{B-{j}}\\
=&\alpha_{A}\alpha_{B}(-1)^{r^2+1-p(i)-p(j)}.
\end{split}
\end{equation}

{\bf Case c2.} $i\not\in A,~j\in A$ for $e_A$ in $\bar{\alpha}$ and $i\in B,~j\not\in B$ for $e_B$ in ${\alpha}$ with $A+{i}=B+{j}$.

We assume $e_A=e_{h_1\cdots h_{p(j)}\cdots h_r}$ and $h_{p(j)}=j$, $e_B=e_{h_1\cdots h_{p(i)}\cdots h_r}$ and $h_{p(i)}=i$.
We have
\begin{equation}\label{}
\begin{split}
\alpha_A\bar{e}_Ae_i=&\alpha_{A}(-1)^{\frac{r(r+1)}{2}}\cdot e_{h_1}\cdots e_j\cdots e_{h_r}\cdot e_i,\\
\alpha_B{e}_B\bar{e}_j=&\alpha_{B} e_{h_1}\cdots e_i\cdots e_{h_r}\cdot \bar{e}_j\\
=&-\alpha_{B} e_{h_1}\cdots e_i\cdots e_{h_r}\cdot{e}_j\\
=&\alpha_{B} e_{h_1}\cdots e_j\cdots e_{h_r}\cdot e_i.
\end{split}\nonumber
\end{equation}
Then
\begin{equation}\label{}
\begin{split}
\alpha_A\bar{e}_Ae_i\alpha_B{e}_B\bar{e}_j=&\alpha_{A}(-1)^{\frac{r(r+1)}{2}}\cdot e_{h_1}\cdots e_j\cdots e_{h_r}\cdot e_i\alpha_{B} e_{h_1}\cdots e_j\cdots e_{h_r}\cdot e_i\\
=&\alpha_{A}\alpha_{B}(-1)^{\frac{r(r+1)}{2}+\frac{(r+1)(r+2)}{2}} \overline{e_{h_1}\cdots e_j\cdots e_{h_r}\cdot e_i} e_{h_1}\cdots e_j\cdots e_{h_r}\cdot e_i\\
=&\alpha_{A}\alpha_{B}(-1)^{r^2+1}.
\end{split}\nonumber
\end{equation}

{\bf Case c3.} $i\in A,~j\in A$ for $e_A$ in $\bar{\alpha}$ and $i\not\in B,~j\not\in B$ for $e_B$ in ${\alpha}$ with $A-{i}=B+{j}$.

For this case, we assume $e_A=e_{h_1\cdots h_{p(i)}\cdots h_{p(j)}\cdots h_{r+2}}$ with $h_{p(i)}=i,~ h_{p(j)}=j$. Without loss of generality, we assume $i<j$. Furthermore, let $e_B=e_{h_1\cdots h_r}$.
We have
\begin{equation}\label{}
\begin{split}
\alpha_A\bar{e}_Ae_i=&\alpha_{A}(-1)^{\frac{(r+2)(r+3)}{2}}\cdot e_{h_1}\cdots e_i\cdots e_j\cdots e_{h_{r+2}}\cdot e_i\\
=&\alpha_{A}(-1)^{\frac{(r+2)(r+3)}{2}+r+2-h(i)}\cdot e_{h_1}\cdots e_j\cdots e_{h_{r+2}}\cdot e^2_i\\
=&\alpha_{A}(-1)^{\frac{(r+2)(r+3)}{2}+r+1-h(i)}\cdot e_{h_1}\cdots e_j\cdots e_{h_{r+2}}\\
=&\alpha_{A}(-1)^{\frac{(r+2)(r+3)}{2}+r+1-h(i)+r+2-h(j)}\cdot e_{h_1}\cdots e_{h_{r+2}}\cdot e_j,\\
\alpha_B{e}_B\bar{e}_j=&\alpha_{B} e_{h_1}\cdots e_{h_{r}}\cdot \bar{e}_j\\
=&-\alpha_{B} e_{h_1}\cdots e_{h_{r}}\cdot{e}_j.
\end{split}\nonumber
\end{equation}
Then
\begin{equation}\label{}
\begin{split}
\alpha_A\bar{e}_Ae_i\alpha_B{e}_B\bar{e}_j=&\alpha_{A}(-1)^{\frac{(r+2)(r+3)}{2}+r+1-h(i)+r+2-h(j)}\cdot e_{h_1}\cdots e_{h_{r+2}}\cdot e_j (-1)\alpha_{B} e_{h_1}\cdots e_{h_{r}}\cdot{e}_j \\
=&\alpha_{A}\alpha_{B} (-1)^{\frac{(r+2)(r+3)}{2}-h(i)-h(j)}\cdot e_{h_1}\cdots e_{h_{r+2}} \cdot e_j  e_{h_1}\cdots e_{h_r}\cdot e_j\\
=&\alpha_{A}\alpha_{B} (-1)^{\frac{(r+2)(r+3)}{2}-h(i)-h(j)+\frac{(r+1)(r+2)}{2}}\cdot \overline{e_{h_1}\cdots e_{h_{r+2}} \cdot e_j}  e_{h_1}\cdots e_{h_r}\cdot e_j\\
=&\alpha_{A}\alpha_{B} (-1)^{r^2-h(j)-h(i)}.
\end{split}\nonumber
\end{equation}

{\bf Case c4.} $i\not\in A,~j\not\in A$ for $e_A$ in $\bar{\alpha}$ and $i\in B,~j\in B$ for $e_B$ in ${\alpha}$ with $A+{i}=B-{j}$.

For this case, we assume $e_A=e_{h_1\cdots h_r}$, $e_B=e_{h_1\cdots h_{p(i)}\cdots h_{p(j)}\cdots h_{r+2}}$ with $h_{p(i)}=i,~ h_{p(j)}=j$ and $i<j$.
We have
\begin{equation}\label{}
\begin{split}
\alpha_A\bar{e}_Ae_i=&\alpha_{A}(-1)^{\frac{r(r+1)}{2}}\cdot e_{h_1}\cdots e_{h_r}\cdot e_i,\\
\alpha_B{e}_B\bar{e}_j=&\alpha_{B} e_{h_1}\cdots e_i\cdots e_j\cdots e_{h_{r+2}}\cdot \bar{e}_j\\
=&\alpha_{B} (-1)^{r+2-h(j)}\cdot e_{h_1}\cdots e_i\cdots e_{h_{r+2}}\cdot e_j \bar{e}_j\\
=&\alpha_{B} (-1)^{r+2-h(j)+r+2-h(i)-1}\cdot e_{h_1}\cdots e_{h_{r+2}}\cdot e_i \\
=&\alpha_{B} (-1)^{1-h(j)-h(i)}\cdot e_{h_1}\cdots e_{h_{r+2}}\cdot e_i \\
\end{split}\nonumber
\end{equation}
Then
\begin{equation}\label{}
\begin{split}
\alpha_A\bar{e}_Ae_i\alpha_B{e}_B\bar{e}_j=&\alpha_{A}(-1)^{\frac{r(r+1)}{2}}\cdot e_{h_1}\cdots e_{h_r}\cdot e_i\alpha_{B} (-1)^{1-h(j)-h(i)}\cdot e_{h_1}\cdots e_{h_{r+2}}\cdot e_i \\
=&\alpha_{A}\alpha_{B} (-1)^{\frac{r(r+1)}{2}+1-h(j)-h(i)}\cdot e_{h_1}\cdots e_{h_r}\cdot e_i\cdot e_{h_1}\cdots e_{h_{r+2}}\cdot e_i \\
=&\alpha_{A}\alpha_{B} (-1)^{\frac{r(r+1)}{2}+1-h(j)-h(i)+\frac{(r+1)(r+2)}{2}}\cdot \overline{e_{h_1}\cdots e_{h_r}\cdot e_i}\cdot e_{h_1}\cdots e_{h_{r+2}}\cdot e_i \\
=&\alpha_{A}\alpha_{B} (-1)^{r^2-h(j)-h(i)}.
\end{split}\nonumber
\end{equation}

Combining cases c1-c4, we have
\begin{equation}\label{}
\begin{split}
I_6=&\langle\tau_{e_0},\sum\limits^n_{j\neq i}(\bar{\alpha}e_j\alpha \bar{e}_i)\frac{\partial^2 \varphi}{\partial x_j\partial x_i}\rangle\\
   =&\langle\tau_{e_0},\sum\limits^n_{j\neq i}\big((\sum_{A}\bar{e_A}\alpha_A)e_j(\sum_{B}{e_B}\alpha_B) \bar{e}_i\big)\frac{\partial^2 \varphi}{\partial x_j\partial x_i}\rangle\\
   =&\langle\tau_{e_0},\sum\limits^n_{j\neq i}\big((\sum_{A}\bar{e_A}\alpha_A)e_i(\sum_{B}{e_B}\alpha_B) \bar{e}_j\big)\frac{\partial^2 \varphi}{\partial x_i\partial x_j}\rangle\\
   =&\sum\limits^n_{j\neq i}\langle\tau_{e_0},(\sum_{A}\bar{e_A}\alpha_A)e_i(\sum_{B}{e_B}\alpha_B) \bar{e}_j\rangle\frac{\partial^2 \varphi}{\partial x_i\partial x_j}\\
   =&\sum\limits^n_{j\neq i}\langle\tau_{e_0},(\sum_{A}\bar{e_A}\alpha_A)e_i(\sum_{B}{e_B}\alpha_B) \bar{e}_j\rangle\frac{\partial^2 \varphi}{\partial x_i\partial x_j}\\
   =&2^n\sum\limits^n_{j\neq i}\Big(\sum_{i\in A,~j\not\in A;A-{i}=B-{j}}\alpha_{A}\alpha_{B}(-1)^{r^2+1-p(i)-p(j)}\\
   &+\sum_{i\not\in A,~j\in A;A+{i}=B+{j}}\alpha_{A}\alpha_{B}(-1)^{r^2+1}\\
   &+\sum_{i\in A,~j\in A;A-{i}=B+{j}}\alpha_{A}\alpha_{B} (-1)^{r^2-h(j)-h(i)}\\
   &+\sum_{i\not\in A,~j\not\in A;A+{i}=B-{j}}\alpha_{A}\alpha_{B} (-1)^{r^2-h(j)-h(i)}\Big)\frac{\partial^2 \varphi}{\partial x_i\partial x_j}.
\end{split}\nonumber
\end{equation}

In all,
\begin{equation}
\begin{split}
I_3=&\int_\Omega I_4e^{-\varphi}dx\\
=&\int_\Omega (I_5+I_6+I_7)e^{-\varphi}dx\\
=&-2^{n+1}\int_\Omega \sum\limits_{i=1}^{n}(\sum\limits_{i\not\in A,|A|^2 ~\mbox{is odd}}\alpha^2_A+\sum\limits_{i\in A,|A|^2 ~\mbox{is even}}\alpha^2_A)\frac{\partial^2 \varphi}{\partial x^2_i}e^{-\varphi}dx\\
&+2^n\int_\Omega \sum\limits^n_{j\neq i}\Big(\sum_{i\in A,~j\not\in A;A-{i}=B-{j}}\alpha_{A}\alpha_{B}(-1)^{r^2+1-p(i)-p(j)}\\
   &+\sum_{i\not\in A,~j\in A;A+{i}=B+{j}}\alpha_{A}\alpha_{B}(-1)^{r^2+1}\\
   &+\sum_{i\in A,~j\in A;A-{i}=B+{j}}\alpha_{A}\alpha_{B} (-1)^{r^2-h(j)-h(i)}\\
   &+\sum_{i\not\in A,~j\not\in A;A+{i}=B-{j}}\alpha_{A}\alpha_{B} (-1)^{r^2-h(j)-h(i)}\Big)\frac{\partial^2 \varphi}{\partial x_i\partial x_j} e^{-\varphi}dx.
\end{split}\nonumber
\end{equation}
Then
\begin{equation}\label{38}
\begin{split}
\|\overline{D}^*_\varphi\alpha\|^2=\|\overline{D}\alpha\|^2+\int_\Omega|\alpha|^2_0\Delta\varphi e^{-\varphi}dx
+I_3.
\end{split}
\end{equation}
If $\frac{\partial^2 \varphi}{\partial x_j\partial x_i}=0,~i\neq j,~1\leq i,j\leq n$ and $\frac{\partial^2 \varphi}{\partial x^2_i}\leq 0,~1\leq i\leq n$,  we have $I_3\geq 0$, and
$$\|\overline{D}^*_\varphi\alpha\|^2\geq \int_\Omega|\alpha|^2_0\Delta\varphi e^{-\varphi}dx.$$
With the above analysis, we can prove Theorem \ref{thm2} easily.
\begin{proof}
It is sufficient to prove the theorem if condition (\ref{eq:5}) in Theorem \ref{thm1} is presented. By Cauchy-Schwarz inequality in Proposition \ref{prop1}, we have for any $\alpha\in C^\infty_0(\Omega,\A)$ that
\begin{equation}
\begin{split}
|({f},\alpha)_\varphi|^2_0=&\big|\int_\Omega\bar{f}\cdot\alpha e^{-\varphi}dx\big|^2_0\\
=&~\big|\int_\Omega\bar{f}\cdot
\frac{1}{\sqrt{\Delta\varphi}}\cdot\alpha\cdot\sqrt{\Delta\varphi}\cdot e^{-\varphi}dx\big|^2_0\\
\leq&~\big\|\bar{f}\frac{1}{\sqrt{\Delta\varphi}}\big\|^2\cdot\big\|\alpha\cdot\sqrt{\Delta\varphi}\big\|^2\\
=&~\int_\Omega\big|\frac{\bar{f}}{\sqrt{\Delta\varphi}}\big|^2_0e^{-\varphi}dx\cdot
\int_\Omega\big|\alpha\cdot\sqrt{\Delta\varphi}\big|^2_0e^{-\varphi}dx\\
\leq & c\|\overline{D}^*_\varphi\alpha\|^2.
\end{split}\nonumber
\end{equation}
The proof is completed with Theorem \ref{thm1}.
\end{proof}

It should be noticed that when $n=1$, $I_3=0$. Then it comes from equation (\ref{38}) that the H\"ormander's $L^2$ theorem in $\R^{2}$ could be described which equals the classical H\"ormander's $L^2$ theorem in $\mathbb{C}$.
\begin{cor}\label{thm3}
Given $\varphi\in C^2(\Omega,\mathbb{R})$ with $\Omega$ being an open subset of $\R^{2}$; $\Delta\varphi\geq0$. Then for all $ f\in L^2(\Omega,\A,\varphi)$ with $\int_\Omega\frac{|f|^2_0}{\Delta\varphi}e^{-\varphi}dx=c<\infty$, there exists a $u\in L^2(\Omega,\A,\varphi)$ such that $$\overline{D}u=f$$ with
$$\|u\|^2\leq\int_\Omega\frac{|f|^2_0}{\Delta\varphi}e^{-\varphi}dx.$$
\end{cor}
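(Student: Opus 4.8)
The plan is to obtain Corollary \ref{thm3} as the flat, $n=1$ specialization of the computation already carried out for Theorem \ref{thm2}, exploiting the simplifications that occur on the plane $\R^2$. When $\Omega\subset\R^2$ the algebra $\A$ is spanned by $\{e_0,e_1\}$ with $e_1^2=-1$, so it is essentially $\co$; in particular $\bar uu=u_0^2+u_1^2$ is a nonnegative scalar for every $u=u_0e_0+u_1e_1$, and, as recorded in the remark after Theorem \ref{thm1}, the constant $2^{2n}$ appearing there is superfluous in this case, so that Theorem \ref{thm1} reduces to the classical H\"ormander $L^2$ estimate in $\co$.

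First I would appeal to the identity (\ref{38}), namely $\|\overline{D}^*_\varphi\alpha\|^2=\|\overline{D}\alpha\|^2+\int_\Omega|\alpha|^2_0\,\Delta\varphi\,e^{-\varphi}\,dx+I_3$, together with the fact established just before the statement of the corollary that $I_3=0$ when $n=1$. Since $\Delta\varphi\ge0$ by hypothesis and $\|\overline{D}\alpha\|^2\ge0$, this yields
\[
\|\overline{D}^*_\varphi\alpha\|^2\ \ge\ \int_\Omega|\alpha|^2_0\,\Delta\varphi\,e^{-\varphi}\,dx,\qquad\forall\,\alpha\in C^\infty_0(\Omega,\A).
\]
I would stress that, unlike in Theorem \ref{thm2}, no hypothesis on the individual second derivatives $\partial^2\varphi/\partial x_j\partial x_i$ is needed here: the vanishing of $I_3$ means subharmonicity of $\varphi$ alone suffices.

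Next I would reproduce verbatim the Cauchy--Schwarz step from the proof of Theorem \ref{thm2}: writing $\bar f\alpha=(\bar f/\sqrt{\Delta\varphi})(\alpha\sqrt{\Delta\varphi})$ and using Proposition \ref{prop1},
\[
|(f,\alpha)_\varphi|^2_0\ \le\ \Big(\int_\Omega\frac{|f|^2_0}{\Delta\varphi}e^{-\varphi}dx\Big)\Big(\int_\Omega|\alpha|^2_0\,\Delta\varphi\,e^{-\varphi}dx\Big)\ \le\ c\,\|\overline{D}^*_\varphi\alpha\|^2,
\]
so that hypothesis (\ref{eq:5}) of Theorem \ref{thm1} holds with this $c$. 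Applying Theorem \ref{thm1} produces $u\in L^2(\Omega,\A,\varphi)$ with $\overline{D}u=f$, and invoking the $n=1$ form of that theorem (per the remark following it) improves the bound from $2^{2n}c$ to $\|u\|^2\le c=\int_\Omega\frac{|f|^2_0}{\Delta\varphi}e^{-\varphi}dx$, which is the asserted estimate.

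The step I expect to require the most care is precisely this last sharpening of the constant. It amounts to rechecking the final norm estimate in the proof of Theorem \ref{thm1} when $\A\cong\co$: there $\bar uu=u_0^2+u_1^2$ carries no $e_A$-component with $A\ne\emptyset$, so the cross term in the expansion of $|(u,u)_\varphi|^2_0$ is absent and the chain $\|u\|^2\le 2^{n/2}|(u,u)_\varphi|_0\le 2^{n/2}\sqrt{c^*}\,\|u\|$ becomes tight, while the Hahn--Banach extension over $\co$ costs no constant; tracking how the various factors of $2^{n}$ (from $(\cdot,\cdot)_0$, from $|e_A|_0=2^{n/2}$, and from the vanishing cross term) cancel is the delicate point. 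A minor technical nuisance, handled exactly as in Theorem \ref{thm2}, is the zero set of $\Delta\varphi$: finiteness of $c$ forces $f=0$ a.e.\ there, so the Cauchy--Schwarz inequality is read on $\{\Delta\varphi>0\}$ with the usual convention.
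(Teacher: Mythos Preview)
Your approach is essentially the paper's own: the corollary is stated immediately after the observation that $I_3=0$ when $n=1$, so that (\ref{38}) gives $\|\overline{D}^*_\varphi\alpha\|^2\ge\int_\Omega|\alpha|^2_0\,\Delta\varphi\,e^{-\varphi}dx$ under the sole hypothesis $\Delta\varphi\ge0$, and the Cauchy--Schwarz step plus Theorem~\ref{thm1} then apply verbatim. Your identification of the constant sharpening from $2^{2n}c$ to $c$ as the point needing care is apt; the paper itself simply asserts (in the remark after Theorem~\ref{thm1}) that the factor disappears when $n=1$ without spelling out the cancellation, so your more explicit accounting of why $\bar uu$ is scalar and why the Hahn--Banach/Riesz step costs no constant over $\co$ goes beyond what the paper records.
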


\section{Conclusion}
 In this paper, based on the H\"ormander's $L^2$ theorem in complex analysis, the H\"ormander's $L^2$ theorem for Dirac operator in $\R^{n+1}$  has been obtained by Clifford algebra. When $n=1$, the result is equivalent to the classical H\"ormander's $L^2$ theorem in complex variable. Moreover, for any $f$ in $L^2$ space over a bounded domain with value in Clifford algebra, there is a weak solution of Dirac operator with the solution in the $L^2$ space as well. The potential applications of the results will be studied in our future work.

\begin{acknowledgements}
This work was supported by the National Natural Science Foundations of China (No. 11171255, 11101373) and Doctoral Program Foundation of the Ministry of Education of China (No. 20090072110053).
\end{acknowledgements}




%
%

\end{document}